\newcommand*\defn{\textbf}
\newcommand*\Lan{\mathrm{Lan}}
\newcommand*\Ran{\mathrm{Ran}}
\newcommand*\Colim{\mathrm{Colim}}
\newcommand*\op{{\!{op}}}
\newcommand*\Psh{\!{Psh}}
\newcommand*\Ind{\!{Ind}}
\newcommand*\Sind{\!{Sind}}
\newcommand*\Rec{\!{Rec}}
\newcommand*\yoneda{\!y}
\newlist{eqenum}{enumerate}{1}
\let\c@eqenumi=\@undefined
\newaliascnt{eqenumi}{equation}
\setlist[eqenum]{label=(\theequation),align=left,labelindent=0pt,resume}
\crefname{eqenumi}{}{}
\begin{document}

\title{On sifted colimits in the presence of pullbacks}
\author{Ruiyuan Chen}
\date{}
\maketitle

\begin{abstract}
We show that in a category with pullbacks, arbitrary sifted colimits may be constructed as filtered colimits of reflexive coequalizers.
This implies that ``lex sifted colimits'', in the sense of Garner--Lack, decompose as Barr-exactness plus filtered colimits commuting with finite limits.
We also prove generalizations of these results for $\kappa$-small sifted and filtered colimits, and their interaction with $\lambda$-small limits in place of finite ones, generalizing Garner's characterization of algebraic exactness in the sense of Adámek--Lawvere--Rosický.
Along the way, we prove a general result on classes of colimits, showing that the $\kappa$-small restriction of a saturated class of colimits is still ``closed under iteration''.
\let\thefootnote=\relax
\footnotetext{2020 \emph{Mathematics Subject Classification}: 18A30, 18C35, 18E08.}
\footnotetext{\emph{Key words and phrases}: sifted colimit, reflexive coequalizer, exact category, free cocompletion.}
\end{abstract}

\section{Introduction}

A category is called \defn{sifted} if the category of cocones over any finite discrete family of objects in it is connected.
The significance of this notion is that sifted colimits are precisely those which commute with finite products in the category of sets.
Thus, sifted colimits exist in any finitary universal-algebraic variety and are computed on the level of the underlying sets.
For background on sifted colimits and their key role in categorical universal algebra, see \cite{ARsifted}, \cite{ARValgthy}.

The main examples of sifted colimits are filtered colimits and \defn{reflexive coequalizers}, i.e., coequalizers of parallel pairs of morphisms $X \rightrightarrows Y$ with a common section $Y -> X$.
It is well-known that these two types of colimits ``almost'' suffice to generate all sifted colimits.
To state this precisely, recall that by general principles \cite[5.35]{Kvcat}, every category $\!C$ has a \emph{free cocompletion} under any given class of colimits, which can be explicitly constructed as the full subcategory of the presheaf category $[\!C^\op, \!{Set}]$ on the closure of the representables under said colimits.
Let
\begin{align*}
\Sind(\!C) &:= \text{free cocompletion of $\!C$ under small sifted colimits}, \\
\Ind(\!C) &:= \text{free cocompletion of $\!C$ under small filtered colimits}, \\
\Rec(\!C) &:= \text{free cocompletion of $\!C$ under reflexive coequalizers}.
\end{align*}
Now Adámek--Rosický~\cite[2.3(2)]{ARsifted} (see also \cite[7.3]{ARValgthy}) showed that for $\!C$ with finite coproducts,
\begin{equation*}
\Sind(\!C) \simeq \Ind(\Rec(\!C));
\end{equation*}
the same equation was also shown for complete $\!C$ by Adámek--Rosický--Vitale~\cite[5.1]{ARValgex}.
It follows from this equation that if $\!C$ also has small sifted colimits, then those may be constructed as filtered colimits of reflexive coequalizers.
This then implies that a functor $F : \!C -> \!D$ preserving these latter types of colimits also preserves all sifted ones, as shown by Joyal~\cite[33.24]{Jlogos}, Lack~\cite[3.2]{LRlawvere}, and Adámek--Rosický--Vitale~\cite[2.1]{ARVsifted}.
However, some such assumption on $\!C$ as completeness or existence of finite coproducts is needed in all of these results: Adámek--Rosický--Vitale~\cite[\S1]{ARVsifted} give counterexamples for a general $\!C$.

The main results of this paper show that sifted colimits may be constructed as filtered colimits of reflexive coequalizers, in all the precise senses just described, assuming instead that $\!C$ has pullbacks.
In fact, we prove a ``relative'' version of this, where all colimits are bounded in size by some regular cardinal $\kappa \le \infty$.
The precise statements are given by \cref{thm:pb-sindk} and \cref{thm:pb-siftedk}.
These results are ultimately based on some interactions between pullbacks and sifted colimits of a purely combinatorial nature, that we consider in \cref{sec:rec,sec:sifted}.

In \cref{sec:lex}, we apply our main results to the richer setting where not only pullbacks but all $\lambda$-small limits exist (for some $\lambda \le \infty$ suitably related to $\kappa$), and these obey all compatibility or ``exactness'' conditions with the $\kappa$-small sifted colimits as hold in $\!{Set}$.
When $\lambda = \omega$, these conditions become ``lex sifted colimits'' in the sense of Garner--Lack~\cite{GLlex}; when $\lambda = \infty$, they become Adámek--Lawvere--Rosický's ``algebraically exact categories'' \cite{ALRalgex}.
By combining our main \cref{thm:pb-sindk} with known characterizations of various ``exactness'' conditions, we obtain that ``exactness'' between $\lambda$-small limits and $\kappa$-small sifted colimits may be reduced to four familiar conditions on quotients (i.e., Barr-exactness when $\lambda = \omega$) and filtered colimits; see \cref{thm:siftex}.
This generalizes Garner's~\cite{Galgex} characterization of algebraic exactness in the case $\lambda = \infty$.

In \cref{sec:colimk}, which is largely independent from the rest of the paper, we prove a general result on colimits, needed for our main results.
For a class of colimits $\Phi$ (e.g., the sifted ones), the aforementioned abstract construction of the free $\Phi$-cocompletion, as iterated $\Phi$-colimits of representable presheaves, can in certain cases be simplified by removing the need for iteration.
Such $\Phi$ are the \emph{saturated classes} of Albert--Kelly~\cite{AKsat}; sifted colimits were shown to form a saturated class in \cite{ARsifted}.
We show in \cref{thm:phik-lan} and \cref{thm:phik-lan-sat} that, roughly speaking, for any saturated class $\Phi$ and regular cardinal $\kappa$, the $\kappa$-small $\Phi$-colimits are still saturated.
This result, which boils down to a simple accessibility argument, plays a key role in the proof of the $\kappa$-small version of our main result, by providing an explicit description of the free $\kappa$-small sifted-cocompletion.
We also give one other application: we rederive, in \cref{thm:makkai-pare}, Makkai--Paré's \cite[2.3.11]{MPacc} ``retract-free'' characterization of $\lambda$-presentable objects in $\kappa$-accessible categories.

\paragraph*{Acknowledgments}

I would like to thank Richard Garner for pointing out the close connections of our work to \cite{ARValgex} and \cite{Galgex}, as well as the referee for several helpful suggestions which improved the presentation of \cref{sec:colimk}.
Research partially supported by NSF grant DMS-2054508.

\section{Reflexive coequalizers and pullbacks}
\label{sec:rec}

Throughout this paper, ``category'' will mean locally small category by default, so that we have a Yoneda embedding, denoted $\yoneda = \yoneda_\!C : \!C -> [\!C^\op, \!{Set}]$; we will sometimes treat $\yoneda$ as an inclusion.

We begin by describing the free reflexive-coequalizer cocompletion $\Rec(\!C)$ of a category with pullbacks $\!C$.
The construction is the same as that of Pitts (see \cite[\S2]{BCsymtop}, \cite[17.12]{ARValgthy}) for $\!C$ with finite coproducts.
Informally speaking, coproducts allow a coequalizer of coequalizers to be reduced to a single coequalizer, by taking the ``union'' of the edge-sets of the two graphs involved; when $\!C$ instead has pullbacks, the ``concatenation'' graph may be used instead of the ``union''.

By a \defn{graph} on an object $X$ in a category $\!C$, we will mean an arbitrary parallel pair $p, q : G \rightrightarrows X$ with codomain $X$; the graph is \defn{reflexive} if $p, q$ have a common section $r : X -> G$ (i.e., $pr = qr = 1_X$).
By abuse of terminology, we will often refer to the graph by $G$ instead of $p, q$.
For another graph $s, t : H \rightrightarrows X$, we say that $G$ is \defn{contained} in $H$ if $p, q$ jointly factor through $s, t$ via some morphism $f : G -> H$, i.e., $sf = p$ and $tf = q$.
If $\!C$ has pullbacks, the \defn{concatenation} of graphs $p, q : G \rightrightarrows X$ and $s, t : H \rightrightarrows X$ is the pullback
\begin{equation*}
\begin{tikzcd}
X &&[-1em] X &[-1em]& X \\[-1em]
& G \ular["p"] \urar["q"'] && H \ular["s"] \urar["t"'] \\[-1ex]
&& K = G \times_X H \ular["v"] \urar["w"']
\end{tikzcd}
\end{equation*}
regarded as a graph via $pv, tw : K \rightrightarrows X$.

We record the following easy facts about graphs, which we will freely use:

\begin{lemma}
\label{thm:graph}
\leavevmode
\begin{enumerate}[label=(\alph*)]
\item \label{thm:graph:refl}
A graph $p, q : G \rightrightarrows X$ is reflexive iff it contains the identity graph $1_X, 1_X : X \rightrightarrows X$.
\item \label{thm:graph:sub-coeq}
If a graph $G$ is contained in $H$, then a morphism coequalizing $H$ also coequalizes $G$.
\item \label{thm:graph:cat-coeq}
For graphs $G, H, K$ on $X$ fitting into a diagram as above (without $K$ necessarily being the pullback), any morphism coequalizing both $G, H$ also coequalizes $K$.
\item \label{thm:graph:refl-cat}
If $G, H$ are graphs on $X$, and $G$ is reflexive, then $H$ is contained in $G \times_X H$ and $H \times_X G$.
\item \label{thm:graph:refl-cat-coeq}
Thus, if $G, H$ are both reflexive, then so is $G \times_X H$, and for any functor $F : \!C -> \!D$, a morphism coequalizes $F(G \times_X H) \rightrightarrows F(X)$ iff it coequalizes both $F(G), F(H)$.
\end{enumerate}
\end{lemma}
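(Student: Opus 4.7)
The plan is to verify each part by elementary diagram-chasing directly from the definitions, since ``contained'' and ``reflexive'' are both phrased as the existence of a certain morphism. No nontrivial categorical construction is needed beyond the universal property of pullbacks.

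Parts (a)--(c) are immediate. For (a), unfolding ``$G$ contains $1_X, 1_X$'' gives a morphism $r : X \to G$ with $pr = 1_X = qr$, which is exactly the definition of reflexivity. For (b), if $f : G \to H$ realizes the containment and $h : X \to Y$ satisfies $hs = ht$, then $hp = hsf = htf = hq$. For (c), the given $V$-shaped diagram forces $qv = sw$, so any $h$ with $hp = hq$ and $hs = ht$ satisfies $h\cdot pv = h\cdot qv = h\cdot sw = h\cdot tw$.

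For (d), given a reflexivity section $r : X \to G$ for $G$, the pair of morphisms $rs : H \to G$ and $1_H : H \to H$ is compatible with the cospan defining $G \times_X H$ since $q\cdot rs = s = s\cdot 1_H$, so the pullback's universal property yields a factorization $f : H \to G\times_X H$. Post-composing with the two legs $pv$ and $tw$ of the concatenation graph recovers $s$ and $t$ respectively, witnessing the containment. The case $H \times_X G$ is symmetric, using $rt$ and $1_H$. For (e), reflexivity of $G\times_X H$ is seen by pairing the two reflexivity sections $r_G : X \to G$ and $r_H : X \to H$; they are compatible because $q r_G = 1_X = s r_H$. The ``iff'' statement for a functor $F$ combines the previous parts: the backward direction uses that (d) applied to $F$ (which still produces the required compatibility morphisms, since $F$ preserves the relevant commuting triangles even without preserving the pullback) shows both $F(G)$ and $F(H)$ are contained in $F(G\times_X H)$, then (b) gives the conclusion; the forward direction observes that the image under $F$ of the pullback cospan still fits the hypothesis of (c), so coequalizing $F(G)$ and $F(H)$ forces coequalizing the concatenation $F(G\times_X H)$.

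The only point that deserves any care is the one just noted in (e): although $F$ need not preserve the pullback $G\times_X H$, the statement of (c) was deliberately formulated for an arbitrary $K$ fitting into the $V$-shape rather than only the pullback, precisely so that $F(G\times_X H)$ with its induced maps $Fv, Fw$ still qualifies. Everything else is routine.
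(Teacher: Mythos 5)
Your proof is correct and takes essentially the same route as the paper's, which writes out only (d) and (e): the paper uses the very same factorization $(rs, 1_H) : H \to G \times_X H$ for (d) and the same combination of (a)--(d) for (e), and your observation that (c) is stated for an arbitrary $K$ in the V-shaped diagram precisely so that it applies to $F(G\times_X H)$ is exactly the intended point. The only quibble is that in (e) you have swapped the labels ``forward'' and ``backward'' --- the containment-plus-(b) argument shows that coequalizing $F(G\times_X H)$ implies coequalizing $F(G)$ and $F(H)$, while (c) gives the converse --- but both implications are correctly established.
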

\begin{proof}
\cref{thm:graph:refl-cat}
If $p, q$ have common section $r : X -> G$, then $s, t$ jointly factor through $pv, tw$ via $(rs, 1_H) : H -> G \times_X H$; similarly for $H \times_X G$.

\cref{thm:graph:refl-cat-coeq}
The first claim follows from \cref{thm:graph:refl,thm:graph:refl-cat}; the second follows from \cref{thm:graph:sub-coeq}, \cref{thm:graph:cat-coeq}, and \cref{thm:graph:refl-cat}.
\end{proof}

For an arbitrary category $\!C$, as noted in the Introduction, the free reflexive-coequalizer cocompletion $\Rec(\!C)$ may be constructed as the full subcategory of $[\!C^\op, \!{Set}]$ obtained by closing the representables under reflexive coequalizers, with the Yoneda embedding $\yoneda : \!C  -> \Rec(\!C) \subseteq [\!C^\op, \!{Set}]$ as unit.
In particular, $\Rec(\!C)$ contains the coequalizers, in $[\!C^\op, \!{Set}]$, of all reflexive graphs in $\!C$.

\begin{proposition}
\label{thm:pb-rec}
For a category with pullbacks $\!C$, the full subcategory of $[\!C^\op, \!{Set}]$ on the coequalizers of reflexive graphs in $\!C$ is already closed under reflexive coequalizers, hence is $\Rec(\!C)$.
\end{proposition}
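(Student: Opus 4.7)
The plan is to mimic Pitts's construction, cited just above, for the case of $\!C$ with finite coproducts: where Pitts uses the coproduct of two graphs, we use the pullback-concatenation from \cref{thm:graph}.

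Suppose $a_0, a_1 : A \rightrightarrows B$ is a reflexive pair in $[\!C^\op, \!{Set}]$ with common section $s : B \to A$, and that $A = \mathrm{coeq}(H \rightrightarrows Y)$, $B = \mathrm{coeq}(G \rightrightarrows X)$ with $H, G$ reflexive graphs in $\!C$; let $q_A, q_B$ denote the quotient maps. The goal is to exhibit the coequalizer $C$ of $a_0, a_1$ in $[\!C^\op, \!{Set}]$ as the coequalizer of a reflexive graph in $\!C$.

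First I would lift the data to $\!C$: by the universal properties of the two reflexive coequalizers, choose $\sigma : X \to Y$ with $q_A \sigma = s q_B$ and $f_0, f_1 : Y \to X$ with $q_B f_i = a_i q_A$. The splitting identities $a_i s = 1_B$ yield $q_B f_i \sigma = q_B$, so that $f_i \sigma$ and $1_X$ are connected by a finite $G$-zigzag in $X(X)$. Next I would form the pullback-concatenation
\[
  \tilde{G} \;:=\; G^{*n} \times_X Y \times_X G^{*n},
\]
where $G^{*n} := G \times_X \cdots \times_X G$ ($n$-fold) is a reflexive graph on $X$ by iterated application of \cref{thm:graph:refl-cat-coeq}; the two outer pullbacks link the innermost legs of the $G^{*n}$'s with $f_0, f_1$ of $Y$ respectively, and the outermost legs of the $G^{*n}$'s exhibit $\tilde{G}$ as a graph on $X$ in $\!C$. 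The integer $n$ is chosen large enough to accommodate the zigzag of the previous step.

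The verifications to complete the argument are: (i) $\tilde{G}$ is reflexive, via a section $X \to \tilde{G}$ built from $\sigma$ in the middle $Y$-position together with explicit witnesses of $f_i \sigma \sim_G 1_X$ in the two copies of $G^{*n}$; and (ii) the coequalizer of $\tilde{G}$ in $[\!C^\op, \!{Set}]$ is $C$, by showing that the $\tilde{G}$-equivalence relation on $X$ coincides with the equivalence relation jointly generated by $G$ and the graph $(f_0, f_1) : Y \rightrightarrows X$.

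The main obstacle is (ii), and in particular capturing each $G$-step as a $\tilde{G}$-element: given $g : Z \to G$ with legs $pg, qg$, one builds a $\tilde{G}$-element with these as outer legs by extending $g$ with $G^{*n}$-chains that bridge from $qg$ to $f_0(\sigma \cdot qg)$ (and symmetrically back from $f_1(\sigma \cdot qg)$ to $qg$), inserting $\sigma \cdot qg$ as the middle $Y$-entry. Having sufficient iteration $n$ is precisely what makes these bridges realizable, and this is the point where the pullback structure of $\!C$ plays the role analogous to finite coproducts in Pitts's construction.
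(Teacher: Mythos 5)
Your overall strategy is the same as the paper's: lift the reflexive pair and its common section to the presenting objects, use the identities $a_i s = 1_B$ to produce witnesses in $\!C$, form a concatenation graph with the lifted pair in the middle and copies of $G$ on either side, and check that it is reflexive and has the right coequalizer. However, one step fails as written: the side factors $G^{*n} = G \times_X \dotsb \times_X G$ cannot realize the witnesses you need. The identity $q_B f_i \sigma = q_B$ only tells you that $f_i\sigma$ and $1_X$ are connected by a \emph{zigzag} for the equivalence relation generated by the image of $\!C(X,G) \rightrightarrows \!C(X,X)$, and such a zigzag traverses $G$-edges in both directions. An element of $G^{*n}$, by contrast, is a \emph{directed} chain of $n$ composable $G$-edges, so it realizes only the $n$-fold relational composite $R \circ \dotsb \circ R$ of the image relation $R$ of $G$. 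A reflexive graph's image relation is reflexive but in general not symmetric (e.g.\ $R = \{(0,0),(1,1),(0,1)\}$ on a two-element set, whose powers never contain $(1,0)$ although the generated equivalence relation does), so no choice of $n$ makes the backward steps of the zigzag realizable in $G^{*n}$: neither your section $X \to \tilde{G}$ in (i) nor the bridges in (ii) can be constructed. The fix is exactly what the paper's proof does with its $\dotsb \times_Y H \times_Y H^\op \times_Y \dotsb$: interleave reversed copies, replacing $G^{*n}$ by $(G \times_X G^\op)^{*n}$, where $G^\op$ is $G$ with its two legs swapped. Since $G$ and $G^\op$ are both reflexive, each factor $G \times_X G^\op$ realizes a single forward or backward $G$-step after padding with the section, and your argument then goes through.

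With that repair, your proof is essentially the paper's, up to one organizational difference in step (ii): you verify that the coequalizer of $\tilde{G}$ is the joint coequalizer of $G$ and $(f_0,f_1) : Y \rightrightarrows X$ by explicitly realizing each $G$-edge as a $\tilde{G}$-element via bridges through the middle $Y$-slot, whereas the paper first establishes reflexivity of the middle concatenation and then concatenates once more with $G$ (its $H$), so that containment of $G$ in the final graph follows for free from \cref{thm:graph:refl-cat}, with no explicit bridging needed. Both routes work; the paper's is slightly more economical.
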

\begin{proof}
Consider a reflexive parallel pair $f, g$, with common section $h$, between the coequalizers $U, V$ in $[\!C^\op, \!{Set}]$ of two reflexive graphs
$p, q : G \rightrightarrows X$ and $s, t : H \rightrightarrows Y$
in $\!C$:
\begin{equation}
\label{diag:pb-rec}
\begin{tikzcd}
\yoneda G \dar[shift right, "\yoneda p"'] \dar[shift left, "\yoneda q"] &[2em]
\yoneda H \dar[shift right, "\yoneda s"'] \dar[shift left, "\yoneda t"] \\
\yoneda X \dar[two heads] \rar[shift left=2, "\yoneda \~f"] \rar[shift right=2, "\yoneda \~g"'] &
\yoneda Y \dar[two heads] \lar[dashed, "\yoneda \~h"{anchor=center,fill=white,inner sep=0pt}] \\
U \rar[shift left=2, "f"] \rar[shift right=2, "g"'] &
V \lar[dashed, "h"{anchor=center,fill=white,inner sep=1pt}]
\end{tikzcd}
\end{equation}
As in \cite[\S2]{BCsymtop}, we may describe $f, g, h$ explicitly as follows: $f$ descends from a morphism $\yoneda X -> V$ (coequalizing $\yoneda p, \yoneda q$), which corresponds by the Yoneda lemma to an element of $V(X)$, i.e., an equivalence class of morphisms $\~f : X -> Y$ with respect to the equivalence relation generated by the graph $\!C(X, s), \!C(X, t) : \!C(X, H) \rightrightarrows \!C(X, Y)$.
Similarly, $g, h$ lift to some $\~g, \~h$ as shown.
To say that $fh = 1_V$ means that $\~f\~h : Y -> Y$ is equivalent to $1_Y$ via the equivalence relation generated by $\!C(Y, s), \!C(Y, t) : \!C(Y, H) \rightrightarrows \!C(Y, Y)$, which means they are connected by a ``homotopy in $H$'':
\begin{equation*}
\begin{tikzcd}
& Y && Y && Y & \dotsb & Y \\[-1ex]
X \urar["\~f"] && H \ular["s"'] \urar["t"] && H \ular["t"'] \urar["s"] \\[1ex]
Y \ar[u, "\~h"] \ar[urr, dashed] \ar[urrrr, bend right=5, dashed] \ar[uurrrrrrr, bend right=12, "1_Y"']
\end{tikzcd}
\end{equation*}
Similarly, $gh = 1_V$ means that $1_Y$ is connected via a ``homotopy'' to $\~g\~h$.
Pasting the latter ``homotopy'' to the left of the former one shows that the concatenation graph
\begin{equation*}
K := \dotsb \times_Y H \times_Y H^\op \times_Y X \times_Y H \times_Y H^\op \times_Y \dotsb \rightrightarrows Y
\end{equation*}
(where $H^\op$ is $H$ but with the roles of $s, t$ swapped) is reflexive.
Since $H$ is reflexive, $K$ contains the graph $\~f, \~g : X \rightrightarrows Y$.
Now concatenating $K$ once more with $H$ yields a reflexive graph $L$ on $Y$ which contains both $X$ and $H$ and is also a concatenation of copies of $X, H, H^\op$, hence has the same coequalizer in $[\!C^\op, \!{Set}]$ as the joint coequalizer of $\~f, \~g : X \rightrightarrows Y$ and $s, t : H \rightrightarrows Y$, which is easily seen to be the same as the coequalizer of $f, g : U \rightrightarrows V$ (see diagram \cref{diag:pb-rec} above).
\end{proof}

\section{Sifted categories with pullbacks}
\label{sec:sifted}

\begin{lemma}
\label{thm:sifted-pb-refl}
In a sifted category with pullbacks $\!C$, every graph $p, q : G \rightrightarrows X$ is contained in a reflexive graph $s, t : H \rightrightarrows X$.
\end{lemma}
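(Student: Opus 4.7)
The plan is to extract a zigzag of cocones from the siftedness hypothesis, then assemble that zigzag into a reflexive graph via iterated pullbacks. Since $\!C$ is sifted, the category of cocones over the discrete two-object family $\{G, X\}$ is connected. In particular, the two ``tautological'' cocones $(X, p, 1_X)$ and $(X, q, 1_X)$ (each taking either $p$ or $q$ as the component from $G$ and $1_X$ as the component from $X$) lie in the same connected component, so they are joined by a finite zigzag of cocone morphisms
\[ (X, p, 1_X) = (Y_0, u_0, v_0),\; (Y_1, u_1, v_1),\; \dots,\; (Y_n, u_n, v_n) = (X, q, 1_X), \]
whose $i$-th segment is a morphism $\alpha_i$ in $\!C$ between $Y_{i-1}$ and $Y_i$ (in either direction), compatible with the $u$- and $v$-components.

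I will then take $H$ to be the limit in $\!C$ of the underlying zigzag diagram on $Y_0, \dots, Y_n$. This limit exists because a finite zigzag can be computed iteratively: a consecutive pair of inward-pointing arrows contributes a pullback, a pair of outward-pointing arrows just passes through, and same-direction pairs can be composed; only pullbacks are required, which $\!C$ has by hypothesis. Writing $\pi_i \colon H \to Y_i$ for the canonical projections, set $s := \pi_0$ and $t := \pi_n$, both maps $H \to X$ (since $Y_0 = Y_n = X$). Because every cocone morphism preserves the $X$-component, the family $(v_i)_i$ is compatible with the $\alpha_i$'s and so induces a morphism $r \colon X \to H$ with $\pi_i r = v_i$ for all $i$; in particular $s r = v_0 = 1_X = v_n = t r$, exhibiting $r$ as a common section of $s, t$. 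The analogous argument with the $G$-components produces $f \colon G \to H$ satisfying $s f = u_0 = p$ and $t f = u_n = q$, showing that $(G, p, q)$ factors through $(H, s, t)$.

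The chief obstacle is that the zigzag's direction pattern and length are arbitrary, which would clutter a proof by induction or case analysis. Packaging the construction as ``limit of the zigzag diagram'' absorbs this bookkeeping uniformly into the universal property, so the verifications of reflexivity and containment reduce to the compatibility of the $u$- and $v$-families already built into the definition of a cocone morphism.
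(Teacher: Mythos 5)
Your proof is correct and is essentially the paper's own argument: the paper likewise extracts a zigzag of cocones over $\{G, X\}$ joining $(X, p, 1_X)$ to $(X, q, 1_X)$ and then collapses it by repeatedly replacing each peak $X_i \to X_{i+1} \leftarrow X_{i+2}$ with its pullback, which is exactly your ``limit of the zigzag computed by iterated pullbacks,'' with the induced maps from $X$ and $G$ giving reflexivity and containment.
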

\begin{proof}
Since $\!C$ is sifted, there is a zigzag connecting the cospans $G --->{p} X <---{1} X$ and $G --->{q} X <---{1} X$:
\begin{equation*}
\begin{tikzcd}
& X_1 && \dotsb \\[-1em]
\mathllap{X ={}} X_0 \urar[dashed] && X_2 \ular[dashed] \urar[dashed] && X_{2n} \mathrlap{{}= X} \\[3em]
& G \ar[ul, "p"] \ar[uu, dashed] \ar[ur, dashed] \ar[urrr, "q"'{pos=.2}] &&
X \ar[ulll, "1"{pos=.2}] \ar[uull, dashed] \ar[ul, dashed] \ar[ur, "1"']
\end{tikzcd}
\end{equation*}
Repeatedly replace each ``peak'' $X_i -> X_{i+1} <- X_{i+2}$ by its pullback, to get a single ``valley''
\begin{equation*}
\begin{tikzcd}[baseline=(\tikzcdmatrixname-3-1.base)]
X && X \\[-2em]
& H \ular[dashed, "s"'] \urar[dashed, "t"] \\[1em]
G \ar[uu, "p"] \ar[uurr, "q"'{pos=.2}] \ar[ur, dashed] &&
X \ar[uull, "1"{pos=.2}] \ar[uu, "1"'] \ar[ul, dashed]
\end{tikzcd}
\qedhere
\end{equation*}
\end{proof}

The following forms the combinatorial core of our main result (\cref{thm:pb-sindk}):

\begin{proposition}
\label{thm:sifted-pb-rec}
For a sifted category with pullbacks $\!C$, $\Rec(\!C)$ is filtered.
\end{proposition}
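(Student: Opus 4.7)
The plan is to verify the three standard conditions for filteredness of $\Rec(\!C)$: nonemptiness, existence of a cocone under any pair of objects, and existence of a coequalizing morphism for any parallel pair. By \cref{thm:pb-rec}, I may assume every object of $\Rec(\!C)$ is presented as the coequalizer in $[\!C^\op, \!{Set}]$ of some reflexive graph in $\!C$, and I will work entirely with such presentations. Nonemptiness is immediate: the sifted $\!C$ is itself nonempty, so $\yoneda X \in \Rec(\!C)$ for any $X \in \!C$.

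For binary cocones, suppose $U$ is the coequalizer of a reflexive $p, q : G \rightrightarrows X$ and $V$ is the coequalizer of a reflexive $s, t : H \rightrightarrows Y$. Siftedness of $\!C$ yields a cocone $f : X -> Z$, $g : Y -> Z$ on the discrete pair $\{X, Y\}$. By \cref{thm:sifted-pb-refl}, the graphs $fp, fq : G \rightrightarrows Z$ and $gs, gt : H \rightrightarrows Z$ are each contained in reflexive graphs $K_1, K_2$ on $Z$; by \cref{thm:graph}, the pullback $K := K_1 \times_Z K_2$ is then a reflexive graph on $Z$ containing both. Letting $W \in \Rec(\!C)$ be the coequalizer of $K \rightrightarrows Z$, the composites $X -> Z -> W$ and $Y -> Z -> W$ respectively coequalize the graphs $G$ and $H$, hence descend to morphisms $U -> W$ and $V -> W$.

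For the coequalizing condition, given a parallel pair $f, g : U \rightrightarrows V$ with $V$ the coequalizer of a reflexive $s, t : H \rightrightarrows Y$ and $U$ the coequalizer of a reflexive $p, q : G \rightrightarrows X$, use the Yoneda-based description of morphisms into $V$ from the proof of \cref{thm:pb-rec} to lift $f, g$ to representative morphisms $\~f, \~g : X -> Y$ in $\!C$. Apply \cref{thm:sifted-pb-refl} to the graph $\~f, \~g : X \rightrightarrows Y$ to obtain a reflexive graph $H'$ on $Y$ containing it, and set $K := H \times_Y H'$: this is a reflexive graph on $Y$ containing both $H$ and $H'$, by \cref{thm:graph}. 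Letting $W \in \Rec(\!C)$ be the coequalizer of $K \rightrightarrows Y$, the quotient $Y -> W$ factors as $Y -> V -> W$ (since $K$ contains $H$), and in $W$ the representatives $\~f, \~g$ become equal (since $K$ contains the graph whose two legs are $\~f, \~g$). The induced morphism $h : V -> W$ therefore satisfies $hf = hg$.

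The main obstacle, and the source of combinatorial content, is the coequalizing step: morphisms between reflexive coequalizers in $[\!C^\op, \!{Set}]$ are specified only up to the equivalence relation generated by $H$, so one must invoke the explicit description from the proof of \cref{thm:pb-rec} to realize $f, g$ by concrete morphisms in $\!C$ before \cref{thm:sifted-pb-refl} can be applied. Once representatives are chosen, everything reduces to the same symmetric ``pullback two reflexive graphs into one'' pattern already used in the binary-cocone case, and the whole proof is essentially a direct exploitation of \cref{thm:sifted-pb-refl} together with the presentation theorem for $\Rec(\!C)$.
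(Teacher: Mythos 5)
Your proof is correct and follows essentially the same route as the paper's: present objects as coequalizers of reflexive graphs via \cref{thm:pb-rec}, use siftedness together with \cref{thm:sifted-pb-refl} to embed the relevant (composite or lifted) graphs into reflexive graphs, and concatenate reflexive graphs by pullback to produce the required cocones and coequalizing morphisms. The paper's own proof is merely a terser sketch of this same argument.
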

\begin{proof}
Clearly $\Rec(\!C)$ is nonempty because $\!C$ is.
Now let $U, V \in \Rec(\!C)$; by \cref{thm:pb-rec}, they are the coequalizers of (reflexive) graphs $p, q : G \rightrightarrows X$ and $s, t : H \rightrightarrows Y$ in $\!C$.
We may find a cospan over $U, V$ by finding a cospan $X -> Z <- Y$ in $\!C$, finding reflexive graphs on $Z$ containing the composite graphs $G \rightrightarrows X -> Z$ and $H \rightrightarrows Y -> Z$, concatenating them, and taking the reflexive coequalizer in $\Rec(\!C)$.
Given a parallel pair $f, g : U \rightrightarrows V$, as in the proof of \cref{thm:pb-rec}, we may lift them to $\~f, \~g : X \rightrightarrows Y$; we may find a morphism coequalizing $f, g$ by finding a reflexive graph $X'$ on $Y$ containing $\~f, \~g : X \rightrightarrows Y$, concatenating it with $H$, and taking the reflexive coequalizer in $\Rec(\!C)$, yielding the joint coequalizer of $X', H$ (see diagram \cref{diag:pb-rec}).
\end{proof}

\section{Saturated classes of $\kappa$-small colimits}
\label{sec:colimk}

As noted in the Introduction, the goal of this section is to prove a general result on colimits (\cref{thm:phik-lan} and \cref{thm:phik-lan-sat}), which roughly states that for any ``class of colimits $\Phi$ closed under iteration'', the $\kappa$-small $\Phi$-colimits are also ``closed under iteration''.
The study of such ``classes of colimits $\Phi$'' was initiated by Albert--Kelly~\cite{AKsat}, and our result here is a generalization of a result from that paper, \cite[7.4]{AKsat}, for $\kappa = \infty$.
We will use this to deduce (\cref{thm:sindk}) that $\kappa$-small sifted colimits are closed under iteration, from the known fact for $\kappa = \infty$ \cite[2.6]{ARsifted}; this is needed for our main result on sifted colimits.
The reader interested only in our main result may wish to take \cref{thm:sindk} as a black box and skip to the next section.

We begin by recalling the precise notion of a ``class of colimits $\Phi$''; see \cite{AKsat}, \cite{KScolim}.
However, our presentation differs slightly from these references, in that we do not initially restrict the weights in $\Phi$ to have small domain; we may therefore identify the saturation $\Phi^*$ with the free $\Phi$-cocompletion monad.
This is so that we may later discuss, in a uniform manner for all $\kappa \le \infty$, the case where $\Phi$ is generated by $\kappa$-small weights, thereby making clear the analogy between our results and \cite{AKsat}.
We will elaborate on this difference in viewpoint in \cref{rmk:phik-monad} below.

Recall \cite[\S3.4]{Kvcat} that given any category $\!J$ and presheaf $\phi \in [\!J^\op, \!{Set}]$, we may take the \defn{$\phi$-weighted colimit} $\phi \star F$ of a diagram $F : \!J -> \!C$, which is the same as the ordinary colimit of
\begin{equation*}
\yoneda_\!J \down \phi -> \!J --->{F} \!C,
\end{equation*}
i.e., $F$ applied to the canonical diagram over the category of elements $\yoneda_\!J \down \phi$ of $\phi$.
We will call $\phi$ a \defn{small presheaf} if $\!J$ is small; in that case, $\yoneda_\!J \down \phi$ is small, so the weighted colimit $\phi \star F$ is a small colimit.
More generally, we will call $\phi$ \defn{small-presented} if it is a small colimit of representables, in which case we can always take $\phi$ to be the $\phi|\!K^\op$-weighted colimit of the inclusion of a small full subcategory $\!K \subseteq \!J$; then a $\phi$-weighted colimit $\phi \star F$ is the same as the small colimit $\phi|\!K^\op \star F|\!K$.%
\footnote{Our ``small-presented'' is called ``accessible'' in \cite{Kvcat}, \cite{AKsat}, \cite{KScolim}, and ``small'' in many other works.
Our ``small'' is called such in \cite{Kvcat}, but is called a ``weight'' (as opposed to general presheaf) in \cite{KScolim} and other works.}

For any category $\!C$, let
\begin{equation*}
\Psh(\!C) \subseteq [\!C^\op, \!{Set}]
\end{equation*}
denote the full subcategory of small-presented presheaves, which is the free cocompletion of $\!C$ under all small colimits by \cite[5.35]{Kvcat}.
The universal property of $\Psh$ gives it the structure of a lax-idempotent 2-(pseudo)monad on the 2-category of all (locally small) categories, consisting of
\begin{eqenum}
\item \label{it:psh-unit}
for each category $\!C$, the unit $\yoneda_\!C : \!C -> \Psh(\!C)$;
\item \label{it:psh-mul}
for each $\!C$, the multiplication $\Lan_{\yoneda_{\Psh(\!C)}}(1_{\Psh(\!C)}) : \Psh(\Psh(\!C)) -> \Psh(\!C)$, taking $\phi |-> \phi \star 1_{\Psh(\!C)}$;
\item \label{it:psh-fun}
for $F : \!C -> \!D$, the induced cocontinuous functor $\Lan_{F^\op} : \Psh(\!C) -> \Psh(\!D)$, taking $\phi |-> \phi \star \yoneda F$;
\end{eqenum}
as usual for monads, given \cref{it:psh-unit}, we may combine \cref{it:psh-mul} and \cref{it:psh-fun} into
\begin{eqenum}
\item \label{it:psh-kleisli}
for $F : \!C -> \Psh(\!D)$, the Kleisli extension $\Lan_{\yoneda_\!C}(F) : \Psh(\!C) -> \Psh(\!D)$, taking $\phi |-> \phi \star F$.
\end{eqenum}
If $\yoneda_\!C$ has a partial left adjoint defined at some $\phi \in \Psh(\!C)$, the value must be the colimit $\phi \star 1_\!C$.
Thus the algebras of the monad, i.e., those $\!C$ for which $\yoneda_\!C$ has a total left adjoint, are precisely the cocomplete categories.
Similarly, the algebra homomorphisms are the cocontinuous functors.

Let $\Phi$ be a class of small-presented presheaves $\phi \in \Psh(\!C)$ on arbitrary categories $\!C$.
We identify such a $\Phi$ with the map taking each $\!C$ to the full subcategory
\begin{align*}
\Phi[\!C] := \Psh(\!C) \cap \Phi \subseteq \Psh(\!C).
\end{align*}
We conversely use $\Psh$ to name the class of \emph{all} small-presented presheaves (thus $\Psh(\!C)$ could also be denoted $\Psh[\!C]$, in accord with \cref{rmk:phik-monad} below).
A \defn{$\Phi$-colimit} means a colimit weighted by some $\phi \in \Phi$; a category $\!C$ is \defn{$\Phi$-cocomplete} if it has all $\Phi$-colimits; and a functor is \defn{$\Phi$-cocontinuous} if it preserves all $\Phi$-colimits.
The \defn{saturation} $\Phi^*$ of $\Phi$ is given by
\begin{equation*}
\Phi^*[\!C] := \text{closure of representables in $[\!C^\op, \!{Set}]$ under $\Phi$-colimits}.
\end{equation*}
Since every $\phi$ is the $\phi$-weighted colimit of representables $\phi \star \yoneda$, we have $\Phi \subseteq \Phi^*$.
By cocontinuity of $\star$ in the weight \cite[3.23]{Kvcat}, the class of weights $\psi$ for which a $\Phi$-cocomplete category is $\psi$-cocomplete, respectively, for which a $\Phi$-cocontinuous functor is $\psi$-cocontinuous, is closed under $\Phi$-colimits; thus
\begin{align*}
\text{$\Phi$-cocomplete} \iff \text{$\Phi^*$-cocomplete}, &&
\text{$\Phi$-cocontinuous} \iff \text{$\Phi^*$-cocontinuous}.
\end{align*}
In particular, $\Phi^*[\!C]$, being by definition closed under $\Phi$-colimits, is also closed under $\Phi^*$-colimits; that is, $\Phi^{**} = \Phi^*$, so that $\Phi |-> \Phi^*$ is a closure operation on the lattice of subclasses of $\Psh$.
Note that an equivalent definition of this closure operation is
\begin{equation*}
\Phi^* = \text{closure of $\Phi$ under the monad unit \cref{it:psh-unit} and Kleisli extension \cref{it:psh-kleisli} of $\Psh$}.
\end{equation*}
Thus the saturated classes $\Phi = \Phi^*$ are precisely the \defn{full submonads of $\Psh$} (borrowing terminology from \cite[\S3]{GLlex}), i.e., each $\Phi^*[\!C] \subseteq \Psh(\!C)$ is a full (replete) subcategory, and the monad operations of $\Psh$ restrict to $\Phi^*$, making it into a lax-idempotent 2-monad in its own right.
The $\Phi^*$-algebras are those $\!C$ for which $\yoneda : \!C -> \Phi^*[\!C]$ has a left adjoint, which means that $\phi \star 1_\!C$ exists for each $\phi \in \Phi^*[\!C]$; since $\Phi^*$ is closed under \cref{it:psh-fun}, this implies that
$\Lan_{F^\op}(\phi) \star 1_\!C = \phi \star F$ exists for each $F : \!J -> \!C$ and $\phi \in \Phi^*[\!J]$, i.e., that $\!C$ is $\Phi^*$-cocomplete (= $\Phi$-cocomplete).
Similarly, $\Phi^*$-algebra homomorphisms are precisely the $\Phi$-cocontinuous functors.

\begin{remark}
\label{rmk:phik-monad}
The free $\Phi$-cocompletion monad, which we denote $\Phi^*[-]$, is denoted $\Phi(-)$ in \cite{AKsat}, \cite{KScolim}, and many other works on classes of colimits.
These works always consider a class $\Phi$ to consist only of (what we are calling) \emph{small} presheaves; hence, they define the saturation $\Phi^*$ to consist of the presheaves in $\Phi(\!C)$ for \emph{small} $\!C$ only, which is an instance of what we will call ``$\kappa$-saturation'' below (see above \cref{thm:phik-lan-sat}), for $\kappa = \infty$.
With our more general classes of \emph{small-presented} presheaves, we have no need to distinguish between $\Phi^*[-]$ and $\Phi(-)$, and will hence never use the latter notation (except for standard named classes like $\Psh, \Sind$).
\end{remark}

Now let $\kappa \le \infty$ be an infinite regular cardinal (where $\infty$ is the bound on the fixed universe of small sets).
By \defn{$\kappa$-small}, we will generally mean of size $<\kappa$.
A \defn{$\kappa$-small presheaf} $\phi : \!C^\op -> \!{Set}$ is one where
(i) $\!C$ is $\kappa$-small, and
(ii) $\phi$ takes values in the full subcategory $\!{Set}_\kappa$ of $\kappa$-small sets;
note that these imply that $\yoneda \down \phi$ is $\kappa$-small, hence a $\phi$-weighted colimit is a $\kappa$-small colimit.

\begin{lemma}
\label{thm:phik-satk}
If $\Phi$ is a class of $\kappa$-small presheaves, and $\!C$ is $\kappa$-small, then $\Phi^*[\!C]$ consists of $\kappa$-small presheaves.
\end{lemma}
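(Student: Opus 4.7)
The plan is to argue directly that the full subcategory of $[\!C^\op, \!{Set}]$ consisting of $\kappa$-small presheaves is closed under $\Phi$-colimits and contains the representables, hence contains $\Phi^*[\!C]$.

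First I would verify the base case: for $\!C$ a $\kappa$-small category, each representable $\yoneda(c) = \!C(-, c) : \!C^\op \to \!{Set}$ is a $\kappa$-small presheaf. The domain $\!C^\op$ is $\kappa$-small by hypothesis, and each value $\!C(d, c)$ is a hom-set in the $\kappa$-small category $\!C$, hence a $\kappa$-small set.

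Next I would verify the inductive step: the $\kappa$-small presheaves on $\!C$ are closed under $\Phi$-colimits in $[\!C^\op, \!{Set}]$. Let $\phi \in \Phi[\!J]$ and $F : \!J \to [\!C^\op, \!{Set}]$ with each $F(j)$ a $\kappa$-small presheaf. Since $\phi$ is $\kappa$-small, the category $\!J$ and the category of elements $\yoneda_\!J \down \phi$ are $\kappa$-small, so $\phi \star F$ is a $\kappa$-small colimit, computed pointwise in $[\!C^\op, \!{Set}]$. At each $c \in \!C$, the set $(\phi \star F)(c)$ is thus a $\kappa$-small colimit in $\!{Set}$ of the sets $F(j)(c)$, each of which is $\kappa$-small since $F(j)$ is a $\kappa$-small presheaf; regularity of $\kappa$ then forces $(\phi \star F)(c)$ to be $\kappa$-small. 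Together with the fact that the domain $\!C^\op$ is $\kappa$-small, this shows that $\phi \star F$ is a $\kappa$-small presheaf.

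Since $\Phi^*[\!C]$ is by definition the closure of the representables in $[\!C^\op, \!{Set}]$ under $\Phi$-colimits, the two observations above yield $\Phi^*[\!C] \subseteq \{\text{$\kappa$-small presheaves on $\!C$}\}$, as claimed. There is no real obstacle here; the argument is a direct size accounting whose only nontrivial ingredient is the closure of $\!{Set}_\kappa$ under $\kappa$-small colimits, which is exactly regularity of $\kappa$.
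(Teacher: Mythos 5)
Your proof is correct and is essentially the paper's own argument, just written out in full: the paper's one-line proof says exactly that $[\!C^\op,\!{Set}_\kappa]$ contains the representables and is closed under $\kappa$-small (hence $\Phi$-) colimits, which are the two observations you verify. No issues.
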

\begin{proof}
$[\!C^\op, \!{Set}_\kappa]$ contains the representables and is closed under $\kappa$-small, hence $\Phi$-, colimits.
\end{proof}

We now have the main result of this section, which extends \cite[7.4]{AKsat} to the case $\kappa < \infty$:

\begin{proposition}
\label{thm:phik-lan}
Let $\kappa \le \infty$ be uncountable regular, and let $\Phi$ be a class of $\kappa$-small presheaves.
Then for any $\!C$ and $\phi : \!C^\op -> \!{Set}$, we have $\phi \in \Phi^*[\!C]$ iff it is the left Kan extension of some $\psi \in \Phi^*[\!D]$ (which is $\kappa$-small, by the preceding lemma) for some $\kappa$-small subcategory $\!D \subseteq \!C$.
\end{proposition}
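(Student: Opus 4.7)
The plan is to establish the two directions of the biconditional separately. The backward direction is easy: if $\phi = \Lan_{I^\op}(\psi)$ with $I : \!D \hookrightarrow \!C$ a $\kappa$-small subcategory inclusion and $\psi \in \Phi^*[\!D]$, then $\Lan_{I^\op} : \Psh(\!D) -> \Psh(\!C)$ is cocontinuous (as a left adjoint) and sends representables on $\!D$ to representables on $\!C$; hence by induction on the construction of $\Phi^*[-]$ as the closure of representables under $\Phi$-colimits, it restricts to a functor $\Phi^*[\!D] -> \Phi^*[\!C]$, placing $\phi$ in $\Phi^*[\!C]$.

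For the forward direction, I would introduce the auxiliary class $\Psi[\!C] \subseteq \Psh(\!C)$ of all presheaves of the form $\Lan_{I^\op}(\psi)$ with $I : \!D \hookrightarrow \!C$ a $\kappa$-small subcategory and $\psi \in \Phi^*[\!D]$, and prove $\Phi^*[\!C] \subseteq \Psi[\!C]$ by verifying that $\Psi[\!C]$ contains the representables (trivially: take $\!D$ the one-object subcategory on $c$ and $\psi = \yoneda(*)$) and is closed under $\Phi$-colimits. For the closure step, given $\chi \in \Phi[\!J]$ with $\!J$ $\kappa$-small and $F : \!J -> \Psi[\!C]$ with $F(j) = \Lan_{I_j^\op}(\psi_j)$ for $\kappa$-small $I_j : \!D_j \hookrightarrow \!C$ and $\psi_j \in \Phi^*[\!D_j]$, I would form $\!D \subseteq \!C$ as the subcategory generated by the $\!D_j$ together with the (finitely-many-per-component, hence $<\kappa$ in total) morphisms of $\!C$ needed to realize each natural transformation $F(f) : F(j) -> F(j')$ at the representable level. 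Then the diagram $F$ lifts to a functor $\tilde F : \!J -> \Phi^*[\!D]$ with $\tilde F(j) = \Lan_{(\!D_j \hookrightarrow \!D)^\op}(\psi_j)$, so that by transitivity and cocontinuity of Lan, $\Lan_{(\!D \hookrightarrow \!C)^\op}(\chi \star \tilde F) = \chi \star F = \phi$; and $\chi \star \tilde F \in \Phi^*[\!D]$ since $\Phi^*[\!D]$ is closed under $\Phi$-colimits. (When $\!D \hookrightarrow \!C$ is arranged to be fully faithful, the lifting $\tilde F$ is forced by full-faithfulness of $\Lan_{(\!D \hookrightarrow \!C)^\op}$; otherwise one checks functoriality by tracing through the explicit formulas for the $F(f)$ in terms of the added morphisms.)

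The main obstacle is the cardinal arithmetic controlling the size of $\!D$ throughout this construction, and this is precisely where the uncountable regularity of $\kappa$ enters: the initial union $\bigcup_j \!D_j$ has $<\kappa$ morphisms by regularity (since $|\!J|, |\!D_j| < \kappa$), the augmentation to realize the $F(f)$'s adds only $<\kappa$ more, and finally closing under composition and identities to obtain a genuine subcategory still yields a $\kappa$-small category exactly because $\kappa$ is uncountable regular. For $\kappa = \omega$ this fails dramatically—the subcategory generated by a single nontrivial endomorphism is already countably infinite via its iterated powers—and indeed the conclusion of the proposition itself breaks down in that case. Carrying out this bookkeeping cleanly, while simultaneously ensuring the lifted $\tilde F$ is functorial, is the technical heart of the argument; the remaining ingredients (cocontinuity of Lan, transitivity of Lan, closure of $\Phi^*[\!D]$ under $\Phi$-colimits) are standard.
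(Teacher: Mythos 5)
Your overall strategy coincides with the paper's second proof: the backward direction via closure of $\Phi^*$ under $\Lan_{I^\op}$, and the forward direction by showing that the class of presheaves of the stated form contains the representables and is closed under $\Phi$-colimits, the inductive step being a lifting of the $\kappa$-small diagram $F : \!J \to \Psh(\!C)$ to a $\kappa$-small subcategory. However, the inductive step as you describe it has a genuine gap, and it sits exactly where you locate ``the technical heart''. First, the escape hatch via full faithfulness is not available: a full subcategory of $\!C$ on $<\kappa$ objects need not be $\kappa$-small, since $\!C$ is only assumed locally small and its hom-sets may have cardinality $\ge \kappa$; so in general $\!D$ must be a non-full subcategory generated by $<\kappa$ morphisms, and then $\Lan_{I^\op}$ is neither full nor faithful. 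Second, without full faithfulness, merely adjoining to $\!D$ enough morphisms to ``realize each $F(f)$ at the representable level'' does not produce a functor $\tilde F$: the lifted arrows will satisfy $\Lan_{I^\op}(\tilde F(g)\tilde F(f)) = \Lan_{I^\op}(\tilde F(gf))$ in $[\!C^\op, \!{Set}]$, but this need not force $\tilde F(g)\tilde F(f) = \tilde F(gf)$ in $[\!D^\op, \!{Set}]$; the witnesses of these equalities (and of the identity axioms) must themselves be absorbed into $\!D$, and one must argue that this process closes off below $\kappa$.

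The paper's second proof supplies exactly the missing ingredient: it exhibits each hom-set $[\!C^\op, \!{Set}](F(J), F(K))$ as a $\kappa$-filtered colimit of the hom-sets $[\!E^\op, \!{Set}](\Lan_{I_{\!D\!E}^\op}(\psi_J), \Lan_{I_{\!D\!E}^\op}(\psi_K))$ over $\kappa$-small $\!E \supseteq \!D$, using that $\psi_J$ is a $\kappa$-presentable object of $[\!D^\op, \!{Set}]$ (being a $\kappa$-small colimit of representables) together with a pointwise computation of the Kan extension, and it checks that the colimit injections are given by $\Lan_{I_{\!E\!C}^\op}$ and hence respect composition and identities; lifting a $\kappa$-small diagram along such a levelwise $\kappa$-filtered colimit of hom-sets is then standard. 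Your sketch never invokes $\kappa$-presentability of the $\psi_J$, which is what makes each natural transformation determined by $<\kappa$ pieces of representable-level data in the first place (your ``finitely many per component'' should in any case be ``$<\kappa$ many per component'', roughly one generating morphism per element of $\psi_j(D)$, which is still fine for the cardinality count). The bookkeeping you emphasize, including the role of uncountable regularity of $\kappa$, is correctly identified but is the easier half; the functoriality of the lift is the part that needs the filtered-colimit argument rather than a tracing of explicit formulas. The paper also records a slicker alternative (its first proof), via the fact that the strict 2-monad presenting $\Phi^*[-]$ has rank $\kappa$, which packages all of this into general 2-monad theory.
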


Informally speaking, the result holds because the free $\Phi$-cocompletion monad $\Phi^*[-]$ only adjoins operations of $\kappa$-small arity, hence ought to preserve $\kappa$-directed unions.
The first proof we give formalizes this idea, using 2-monad theory.
However, since 2-categorical machinery is not needed in the rest of the paper, we will also sketch for the reader's convenience a second, more direct proof.

\begin{proof}[Proof 1]
As is well-known, the pseudomonad $\Phi^*[-]$ is equivalent, as a pseudomonad, to a \emph{strict} 2-monad $\Phi_\!s$ which freely adjoins \emph{specified} $\Phi$-colimits to a category; see e.g., \cite{KLlim}.
Namely, pick a set $\@J$ of representatives of all $\kappa$-small categories $\!J$ up to isomorphism.
We may assume that $\Phi$ is already closed under transport along isomorphisms of categories.
We define $\Phi_\!s$ by declaring its (strict) algebras to be categories $\!C$ equipped with the following algebraic structure:
\begin{itemize}
\item  for each $\!J \in \@J$ and $\phi \in \Phi[\!J]$, a $\!J$-ary operation
$
\Colim_\phi : \!C^\!J -> \!C;
$
\item  for each $\!J \in \@J$, $\phi \in \Phi[\!J]$, $J \in \!J$, and $j \in \phi(J)$, a natural transformation
\begin{equation*}
\iota_{\phi,J,j} : \pi_J --> \Colim_\phi : \!C^\!J -> \!C
\end{equation*}
where $\pi_J$ is the $J$th projection,
obeying equational axioms saying that the family $(\iota_{\phi,J,j})_{J,j}$ forms (componentwise) a $\phi$-weighted cocone;
\item  for each $\!J \in \@J$ and $\phi \in \Phi[\!J]$, a natural transformation
\begin{equation*}
\gamma_\phi : \Colim_\phi \circ \pi_\!J --> \pi_\phi : \!C^{\!J[\phi]} -> \!C
\end{equation*}
where $\!J[\phi]$ denotes the union of $\!J$ and the single object $\phi$ in $[\!J^\op, \!{Set}]$ (so that a $\!J[\phi]$-indexed diagram is a $\!J$-indexed diagram together with a $\phi$-weighted cocone) and $\pi_\!J : \!C^{\!J[\phi]} -> \!C^\!J$ and $\pi_\phi : \!C^{\!J[\phi]} -> \!C$ are the projections,
obeying axioms saying that $\gamma_\phi$ is a cocone morphism from $(\iota_{\phi,J,j})_{J,j}$ to any other cocone, which is the identity when said other cocone is also $(\iota_{\phi,J,j})_{J,j}$.
\end{itemize}
Thus a $\Phi_\!s$-algebra is a category $\!C$ equipped with, for each $\!J \in \@J$, $\phi \in \Phi[\!J]$, and $F : \!J -> \!C$, a choice $\Colim_\phi(F)$ of $\phi$-weighted colimit $\phi \star F$ equipped with colimiting cocone $(\iota_{\phi,J,j,F})_{J,j}$; by our assumptions on $\Phi, \@J$, such a $\!C$ is then $\Phi$-cocomplete.

Now the strict 2-monad $\Phi_\!s$ which freely adjoins the above-defined structure to a category $\!C$ has rank $\kappa$, i.e., preserves (strict) $\kappa$-filtered colimits of categories, since the above structure is $\kappa$-ary.
Thus $\Phi_\!s(\!C)$ is the colimit of $\Phi_\!s(\!D)$ over all $\kappa$-small $\!D \subseteq \!C$.
We have a pseudonatural equivalence $\Phi_\!s \simeq \Phi^*$, given by arbitrarily fixing $\Phi$-colimits in each $\Phi^*[\!C]$.
Recall that a strict filtered colimit of categories is also a bicolimit (see e.g., \cite[Exposé~VI, \S6]{SGA4}), hence is preserved under equivalence.
Thus $\Phi^*[\!C]$ is the bicolimit of $\Phi^*[\!D]$ over all $\kappa$-small $\!D \subseteq \!C$, along the functors $\Phi^*[I_{\!D\!C}] = \Lan_{I_{\!D\!C}^\op}$ induced by the inclusions $I_{\!D\!C} : \!D -> \!C$, which implies the result.
\end{proof}

\begin{proof}[Proof 2]
$\Longleftarrow$ is because $\Phi^*$ is closed under \cref{it:psh-fun}.
For $\Longrightarrow$, since the conclusion is clearly satisfied by the representables, it suffices to check that if $\theta \in \Phi[\!J]$ and $F : \!J -> [\!C^\op, \!{Set}]$ such that each $F(J)$ is the left Kan extension of $\psi_J \in \Phi^*[\!D_J]$ for some $\kappa$-small $\!D_J \subseteq \!C$, then the conclusion also holds for $\theta \star F$.
Since $\!J$ is $\kappa$-small, the union of the $\!D_J$'s generates a $\kappa$-small $\!D \subseteq \!C$; by replacing each $\psi_J$ with its extension to $\!D$, we may assume each $\!D_J = \!D$ to begin with.
Let $I_{\!D\!C} : \!D -> \!C$ be the inclusion, and let $\Sigma$ be the $\kappa$-directed poset of all $\kappa$-small $\!D \subseteq \!E \subseteq \!C$.
For $J, K \in \!J$, we have
\begin{align*}
[\!C^\op, \!{Set}](F(J), F(K))
&= [\!C^\op, \!{Set}](\Lan_{I_{\!D\!C}^\op}(\psi_J), \Lan_{I_{\!D\!C}^\op}(\psi_K)) \\
&\cong [\!D^\op, \!{Set}](\psi_J, \Lan_{I_{\!D\!C}^\op}(\psi_K) \circ I_{\!D\!C}^\op) \\
\tag{$*$}
&\cong [\!D^\op, \!{Set}](\psi_J, \injlim_{\!E \in \Sigma} \Lan_{I_{\!D\!E}^\op}(\psi_K) \circ I_{\!D\!E}^\op) \\
\tag{$\dagger$}
&\cong \injlim_{\!E \in \Sigma} [\!D^\op, \!{Set}](\psi_J, \Lan_{I_{\!D\!E}^\op}(\psi_K) \circ I_{\!D\!E}^\op) \\
&\cong \injlim_{\!E \in \Sigma} [\!E^\op, \!{Set}](\Lan_{I_{\!D\!E}^\op}(\psi_J), \Lan_{I_{\!D\!E}^\op}(\psi_K)),
\end{align*}
where step ($\dagger$) is by $\kappa$-presentability of $\psi_J$, and step ($*$) is because for each $D \in \!D$ we have
\begin{align*}
\Lan_{I_{\!D\!C}^\op}(\psi_K)(D)
= \!C(D, I_{\!D\!C}-) \star \psi_K
\cong \injlim_{\!E \in \Sigma} \!E(D, I_{\!D\!E}-) \star \psi_K
= \injlim_{\!E \in \Sigma} \Lan_{I_{\!D\!E}^\op}(\psi_K)(D).
\end{align*}
Moreover, chasing through the above chain of isomorphisms, one sees that each colimit injection
\begin{align*}
[\!E^\op, \!{Set}](\Lan_{I_{\!D\!E}^\op}(\psi_J), \Lan_{I_{\!D\!E}^\op}(\psi_K))
--> [\!C^\op, \!{Set}](\Lan_{I_{\!D\!C}^\op}(\psi_J), \Lan_{I_{\!D\!C}^\op}(\psi_K))
\end{align*}
is given by $\Lan_{I_{\!E\!C}^\op}$, hence is compatible with the composition and identity operations between different hom-sets on either side.
Using this and $\kappa$-smallness of $\!J$, we may lift the entire diagram $F$ to a diagram $G : \!J -> [\!E^\op, \!{Set}]$ for some $\!E \in \Sigma$, taking values $G(J) = \Lan_{I_{\!D\!E}^\op}(\psi_J)$ on objects $J \in \!J$ (whence $G : \!J -> \Phi^*[\!E]$),
such that
$F \cong \Lan_{I_{\!E\!C}^\op} \circ G$.
Then $\theta \star F \cong \theta \star \Lan_{I_{\!E\!C}^\op}(G(-)) \cong \Lan_{I_{\!E\!C}^\op}(\theta \star G)$, with $\theta \star G \in \Phi^*[\!E]$.
\end{proof}

Let us also record the following generalization of \cref{thm:phik-lan} (by taking $\!J$ to be a singleton), needed for \cref{thm:sindk-lim} below, which follows from either of the two proofs above:

\begin{corollary}
\label{thm:phik-lan-diag}
Let $\kappa \le \infty$ be uncountable regular, and let $\Phi$ be a class of $\kappa$-small presheaves.
Then for any $\!C$ and $F : \!J -> \Phi^*[\!C]$ with $\!J$ $\kappa$-small, $F$ lifts to some $G : \!J -> \Phi^*[\!D]$ for some $\kappa$-small subcategory $\!D \subseteq \!C$ such that, letting $I : \!D -> \!C$ be the inclusion, we have $F \cong \Lan_{I^\op} \circ G$.
\qed
\end{corollary}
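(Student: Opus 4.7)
The plan is to extract the result directly from either proof of \cref{thm:phik-lan}; both proofs already produce, as an intermediate step, exactly the lifted diagram $G$ asked for. I will describe the Proof~2 route, where the construction is entirely explicit.

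The first step is to unify the objects of $F$ over a single $\kappa$-small base. Applying \cref{thm:phik-lan} to each object $F(J)$ yields a $\kappa$-small $\!D_J \subseteq \!C$ and $\psi_J^0 \in \Phi^*[\!D_J]$ with $F(J) \cong \Lan_{I_{\!D_J\!C}^\op}(\psi_J^0)$. Since $\!J$ is $\kappa$-small and $\kappa$ is regular, $\bigcup_J \!D_J$ generates a $\kappa$-small subcategory $\!D_0 \subseteq \!C$; extending each $\psi_J^0$ along $\!D_J \hookrightarrow \!D_0$ via left Kan extension (using closure of $\Phi^*$ under \cref{it:psh-fun}) produces $\psi_J \in \Phi^*[\!D_0]$ with $F(J) \cong \Lan_{I_{\!D_0\!C}^\op}(\psi_J)$.

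The second step is to lift the remaining functorial data of $F$ to a common enlargement of $\!D_0$. The hom-set computation of Proof~2 of \cref{thm:phik-lan} gives, for each pair $J, K \in \!J$, that $[\!C^\op, \!{Set}](F(J), F(K))$ is the $\kappa$-filtered colimit over the $\kappa$-directed poset $\Sigma$ of $\kappa$-small $\!D_0 \subseteq \!E \subseteq \!C$ of $[\!E^\op, \!{Set}](\Lan_{I_{\!D_0\!E}^\op}(\psi_J), \Lan_{I_{\!D_0\!E}^\op}(\psi_K))$, with colimit injections $\Lan_{I_{\!E\!C}^\op}$ that commute with composition and identity. Since $\!J$ has $<\kappa$ morphisms and $<\kappa$ functoriality equations to witness, by $\kappa$-directedness of $\Sigma$ these can all be realized at a single stage $\!D \in \Sigma$, producing the desired $G : \!J -> \Phi^*[\!D]$ with $G(J) = \Lan_{I_{\!D_0\!D}^\op}(\psi_J)$ and $F \cong \Lan_{I_{\!D\!C}^\op} \circ G$. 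Alternatively, via Proof~1, rank-$\kappa$-ness of the strict 2-monad $\Phi_\!s$ presents $\Phi^*[\!C]$ as the $\kappa$-filtered bicolimit of the $\Phi^*[\!E]$ over $\Sigma$, through which any functor out of the $\kappa$-small $\!J$ factors. The only real obstacle is this last bookkeeping argument---the standard fact that a $\kappa$-small diagram in a $\kappa$-filtered colimit factors through a stage---which Proof~1 packages inside the statement that $\Phi_\!s$ has rank $\kappa$.
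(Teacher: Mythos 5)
Your proposal is correct and takes essentially the same approach as the paper, which proves this corollary simply by observing that it is already contained in either proof of \cref{thm:phik-lan}. Your Proof~2 route---applying the proposition objectwise, consolidating over a single $\kappa$-small base $\!D_0$, and then using the hom-set colimit formula over $\Sigma$ to realize the $\kappa$-small diagram at a single stage---is exactly the construction carried out in the last paragraph of the paper's Proof~2.
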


We now give an equivalent reformulation of \cref{thm:phik-lan}.
For any class $\Phi \subseteq \Psh$, let $\Phi_\kappa \subseteq \Phi$ consist of the $\kappa$-small presheaves in $\Phi$.
We have an adjunction $(-)^* \dashv (-)_\kappa$ between classes of $\kappa$-small presheaves and saturated classes of small-presented presheaves; call the induced closure operation $((-)^*)_\kappa$ \defn{$\kappa$-saturation}.
Explicitly, for a class of $\kappa$-small presheaves $\Phi$, by \cref{thm:phik-satk},
\begin{equation*}
(\Phi^*)_\kappa[\!C] = \begin{cases}
                       \Phi^*[\!C] &\text{if $\!C$ is $\kappa$-small}, \\
                       \emptyset &\text{otherwise}.
                       \end{cases}
\end{equation*}
Thus $\Phi$ is $\kappa$-saturated iff it consists of the $\kappa$-small presheaves $\Phi = \Psi_\kappa$ in some saturated class $\Psi$, iff (it consists only of $\kappa$-small presheaves and) for all $\kappa$-small $\!C$, $\Phi[\!C]$ contains the representables and is closed under $\Phi$-colimits.
(Taking $\kappa = \infty$ recovers the ``small'' notion of saturation used in \cite{AKsat}.)

\begin{corollary}
\label{thm:phik-lan-sat}
Let $\kappa \le \infty$ be uncountable regular, and let $\Phi = \Psi_\kappa$ be a $\kappa$-saturated class of $\kappa$-small presheaves, consisting of the $\kappa$-small presheaves in some saturated class $\Psi$.
Then for any $\!C$ and $\phi : \!C^\op -> \!{Set}$, the following are equivalent:
\begin{enumerate}[label=(\roman*)]
\item \label{thm:phik-lan-sat:iter}
$\phi \in \Phi^*[\!C] = (\Psi_\kappa)^*[\!C]$, i.e., $\phi$ is an iterated $\kappa$-small $\Psi$-colimit of representables;
\item \label{thm:phik-lan-sat:lan}
$\phi$ is the left Kan extension of some $\psi \in \Phi[\!D] = \Psi_\kappa[\!D]$ for some $\kappa$-small $\!D \subseteq \!C$, whence in particular, $\phi$ is a single $\kappa$-small $\Psi$-colimit of representables, namely of the diagram
\begin{equation*}
\yoneda_\!D \down \psi -> \!D \subseteq \!C --->{\yoneda_\!C} [\!C^\op, \!{Set}].
\end{equation*}
\end{enumerate}
\end{corollary}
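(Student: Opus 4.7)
The plan is to reduce the corollary to \cref{thm:phik-lan}. The only new input needed is that, because $\Psi$ is \emph{already} saturated, the class $\Phi^* = (\Psi_\kappa)^*$ of iterated $\kappa$-small $\Psi$-colimits of representables is automatically contained in $\Psi$; so, once $\kappa$-smallness is imposed, there is no gap between a presheaf lying in $\Phi^*[\!D]$ and one lying in $\Psi_\kappa[\!D] = \Phi[\!D]$.

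For \ref{thm:phik-lan-sat:lan}$\Rightarrow$\ref{thm:phik-lan-sat:iter}, I would note that $\Phi \subseteq \Phi^*$ and that $\Phi^*$, being saturated, is closed under the functorial action \cref{it:psh-fun} of $\Psh$, equivalently under left Kan extension along arbitrary functors. Hence any $\phi = \Lan_{I^\op}(\psi)$ with $\psi \in \Phi[\!D]$ lies in $\Phi^*[\!C]$ immediately.

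For the converse \ref{thm:phik-lan-sat:iter}$\Rightarrow$\ref{thm:phik-lan-sat:lan}, I would apply \cref{thm:phik-lan} to produce $\phi = \Lan_{I^\op}(\psi)$ with $\psi \in \Phi^*[\!D]$ for some $\kappa$-small $\!D \subseteq \!C$; by \cref{thm:phik-satk} this $\psi$ is automatically $\kappa$-small. Since $\Psi_\kappa = \Phi \subseteq \Psi$ and $\Psi$ is saturated, we have $\Phi^* \subseteq \Psi^* = \Psi$, whence $\psi \in \Psi[\!D]$; combined with $\kappa$-smallness this yields $\psi \in \Psi_\kappa[\!D] = \Phi[\!D]$, as required. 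The ``in particular'' clause is then the standard identification $\Lan_{I^\op}(\psi) = \psi \star (\yoneda_\!C \circ I)$, which exhibits $\phi$ as a single $\psi$-weighted colimit of representables, equivalently as the ordinary colimit of the displayed diagram indexed by the category of elements $\yoneda_\!D \down \psi$.

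No substantive obstacle is anticipated beyond bookkeeping: \cref{thm:phik-lan} does all the real work, and what remains is only to refine the $\psi$ it supplies from $\Phi^*[\!D]$ down into $\Phi[\!D] = \Psi_\kappa[\!D]$. The step to handle with a bit of care is the inclusion $\Phi^* \subseteq \Psi$, which uses both that $\Psi$ is already saturated and that $\Phi = \Psi_\kappa \subseteq \Psi$ by definition of $\kappa$-saturation.
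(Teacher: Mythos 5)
Your proposal is correct and takes essentially the same route as the paper: both directions reduce to \cref{thm:phik-lan}, and your chain $\psi \in \Phi^*[\!D] \subseteq \Psi[\!D]$ plus $\kappa$-smallness (via \cref{thm:phik-satk}) giving $\psi \in \Psi_\kappa[\!D] = \Phi[\!D]$ is just an unpacking of the paper's one-line observation that $\Phi^*[\!D] = (\Phi^*)_\kappa[\!D] = \Phi[\!D]$ by $\kappa$-saturation.
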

\begin{proof}
This follows from \cref{thm:phik-lan}, since $\Phi^*[\!D] = (\Phi^*)_\kappa[\!D] = \Phi[\!D]$ by $\kappa$-saturation.
\end{proof}

\begin{example}
\label{ex:sindk}
Consider the sifted colimits.
For small $\!C$, it is known by \cite[2.6]{ARsifted} that the free sifted-cocompletion
$\Sind(\!C) \subseteq [\!C^\op, \!{Set}]$
consists of exactly those presheaves $\phi$ whose category of elements $\yoneda \down \phi$ is sifted.
Thus the saturated class $\Sind \subseteq \Psh$, with $\Sind[\!C] := \Sind(\!C)$, has $\Sind_\kappa$ consisting of those presheaves on a $\kappa$-small category with $\kappa$-small sifted category of elements, and so
\begin{equation*}
(\Sind_\kappa)^*[\!C] = \Sind_\kappa(\!C) := \text{free cocompletion of $\!C$ under $\kappa$-small sifted colimits}.
\end{equation*}
Applying \cref{thm:phik-lan-sat} with $\Psi := \Sind$ yields
\end{example}

\begin{corollary}
\label{thm:sindk}
For uncountable regular $\kappa \le \infty$ and any category $\!C$, the following are equivalent for a presheaf $\phi : \!C^\op -> \!{Set}$:
\begin{enumerate}[label=(\roman*)]
\item  $\phi \in \Sind_\kappa(\!C)$, i.e., $\phi$ is an iterated $\kappa$-small sifted colimit of representables;
\item  $\phi$ is the left Kan extension of some $\psi : \!D^\op -> \!{Set}_\kappa$ with sifted category of elements, for some $\kappa$-small $\!D \subseteq \!C$ (hence $\phi$ is a single $\kappa$-small sifted colimit of representables over $\yoneda_\!D \down \psi$).
\qed
\end{enumerate}
\end{corollary}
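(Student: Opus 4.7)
The plan is to derive this corollary as a direct specialization of \cref{thm:phik-lan-sat} with $\Psi := \Sind$ and $\Phi := \Sind_\kappa$. The setup for this specialization has effectively been laid out in \cref{ex:sindk}, so what remains is to verify that \cref{thm:phik-lan-sat} applies and to translate its two conditions into (i) and (ii) here.

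First, I would invoke \cite[2.6]{ARsifted}, which says that for small $\!C$, $\Sind(\!C) \subseteq [\!C^\op,\!{Set}]$ consists precisely of those presheaves $\phi$ whose category of elements $\yoneda \down \phi$ is sifted. Since $\Sind[\!C] := \Sind(\!C)$ is by construction the closure of representables under small sifted colimits, this simultaneously shows that $\Sind \subseteq \Psh$ is a saturated class in our sense, and identifies its $\kappa$-small part explicitly: a $\kappa$-small presheaf $\psi$ on a $\kappa$-small $\!D$ lies in $\Sind_\kappa[\!D]$ iff $\yoneda_\!D \down \psi$ is sifted.

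Second, I would observe that $(\Sind_\kappa)^*[\!C]$ is by definition the closure of representables in $[\!C^\op, \!{Set}]$ under $\kappa$-small $\Sind$-colimits, hence under $\kappa$-small sifted colimits, so it coincides with the $\Sind_\kappa(\!C)$ named in \cref{ex:sindk}. With this identification, the two conditions of \cref{thm:phik-lan-sat} specialize to our (i) and (ii), using the above concrete description of $\Sind_\kappa[\!D]$ to rewrite ``$\psi \in \Sind_\kappa[\!D]$'' as ``$\psi : \!D^\op -> \!{Set}_\kappa$ with sifted category of elements''. The parenthetical ``hence $\phi$ is a single $\kappa$-small sifted colimit of representables over $\yoneda_\!D \down \psi$'' in (ii) then matches the ``whence'' clause of the Kan-extension condition of \cref{thm:phik-lan-sat} verbatim.

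There is no real obstacle to this argument: the only ingredient imported from outside the present paper is the Adámek–Rosický characterization \cite[2.6]{ARsifted}, and everything else is a straightforward dictionary translation under $\Psi \leftrightarrow \Sind$, $\Phi \leftrightarrow \Sind_\kappa$, $\Phi^*[\!C] \leftrightarrow \Sind_\kappa(\!C)$.
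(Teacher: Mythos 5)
Your proposal is correct and is essentially identical to the paper's own derivation: the paper likewise obtains \cref{thm:sindk} by specializing \cref{thm:phik-lan-sat} to $\Psi := \Sind$, using \cite[2.6]{ARsifted} to identify $\Sind_\kappa$ concretely and to equate $(\Sind_\kappa)^*[\!C]$ with $\Sind_\kappa(\!C)$, exactly as laid out in \cref{ex:sindk}. No gaps.
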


\begin{example}
We may of course similarly consider ($\kappa$-)filtered colimits.
For any infinite regular $\kappa < \lambda \le \infty$ and category $\!C$, let
\begin{equation*}
\kappa\Ind_\lambda(\!C) := \text{free cocompletion of $\!C$ under $\lambda$-small $\kappa$-filtered colimits}
\end{equation*}
(omitting $\lambda$ when $\lambda = \infty$ and $\kappa$ when $\kappa = \omega$).
For small $\!C$, it is well-known (see e.g., \cite[2.24]{ARlpac}) that $\kappa\Ind(\!C) \subseteq [\!C^\op, \!{Set}]$ consists of those $\phi$ with $\yoneda \down \phi$ $\kappa$-filtered.
Arguing exactly as above, we get
\end{example}

\begin{corollary}
\label{thm:indk}
For regular $\kappa < \lambda \le \infty$ and any $\!C$, the following are equivalent for $\phi : \!C^\op -> \!{Set}$:
\begin{enumerate}[label=(\roman*)]
\item  $\phi \in \kappa\Ind_\lambda(\!C)$, i.e., $\phi$ is an iterated $\lambda$-small $\kappa$-filtered colimit of representables;
\item  $\phi$ is the left Kan extension of some $\psi : \!D^\op -> \!{Set}_\lambda$ with $\kappa$-filtered category of elements, for some $\lambda$-small $\!D \subseteq \!C$ (hence $\phi$ is a $\lambda$-small $\kappa$-filtered colimit of representables over $\yoneda_\!D \down \psi$).
\qed
\end{enumerate}
\end{corollary}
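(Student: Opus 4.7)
The plan is to run the argument of \cref{ex:sindk,thm:sindk} verbatim, substituting ``$\kappa$-filtered'' for ``sifted'' and letting the bound $\lambda$ play the role of $\kappa$ in the application of \cref{thm:phik-lan-sat}. The cardinal $\lambda$ is regular by assumption, and since $\lambda > \kappa \ge \omega$ it is uncountable, so it meets the standing hypothesis on the bounding cardinal in \cref{thm:phik-lan-sat}.

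First I would exhibit $\kappa\Ind$ as a saturated class, defined on any category $\!C$ by setting $\kappa\Ind[\!C] := \{\phi \in \Psh(\!C) \mid \yoneda_\!C \down \phi \text{ is $\kappa$-filtered}\}$. The cited \cite[2.24]{ARlpac} identifies this with the free $\kappa$-filtered-cocompletion for small $\!C$, and hence shows closure under iterated $\kappa$-filtered colimits there; for a general $\!C$, every small-presented presheaf factors through a small subcategory, so closure of $\kappa\Ind[\!C]$ under the Kleisli extension of $\Psh$ reduces to the small case. This exhibits $\kappa\Ind$ as a full submonad of $\Psh$, i.e., a saturated class.

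Then I would apply \cref{thm:phik-lan-sat} with $\Psi := \kappa\Ind$ and bounding cardinal $\lambda$: by \cref{thm:phik-satk} together with the definition of $\kappa\Ind_\lambda(\!C)$, one has $((\kappa\Ind)_\lambda)^*[\!C] = \kappa\Ind_\lambda(\!C)$, and $(\kappa\Ind)_\lambda[\!D]$ for $\lambda$-small $\!D$ is precisely the class of $\lambda$-small presheaves $\psi : \!D^\op -> \!{Set}_\lambda$ with $\kappa$-filtered category of elements. With these identifications in place, the conclusion of \cref{thm:phik-lan-sat} specializes directly to the stated equivalence (i) $\Leftrightarrow$ (ii). The only real obstacle is the saturation check above; it is routine, but worth flagging as the one place where the large-$\!C$ version of ``$\kappa$-filtered category of elements'' is not already handed over by \cite[2.24]{ARlpac}.
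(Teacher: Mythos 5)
Your proposal is correct and follows essentially the same route as the paper, which itself just says to argue ``exactly as'' in the sifted case of \cref{ex:sindk,thm:sindk}: take $\Psi := \kappa\Ind$, identify $(\kappa\Ind)_\lambda$ via \cite[2.24]{ARlpac}, and apply \cref{thm:phik-lan-sat} with bounding cardinal $\lambda$ (uncountable regular since $\lambda > \kappa \ge \omega$). The one ``obstacle'' you flag is in fact vacuous: by the reformulation preceding \cref{thm:phik-lan-sat}, $\lambda$-saturation only requires closure of $\Phi[\!D]$ under $\Phi$-colimits for $\lambda$-\emph{small} $\!D$, so the small case handled by \cite[2.24]{ARlpac} already suffices and no large-$\!C$ version of the elements criterion is needed (the paper avoids the issue entirely by taking $\Psi$ to be the free $\kappa$-filtered cocompletion, which is saturated by construction).
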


\begin{remark}
The preceding result can also be proved using accessible category theory, in the special case where the ``sharply smaller'' relation $\kappa \lhd \lambda$ holds (see \cite[2.12]{ARlpac}, \cite[2.3]{MPacc}).
Indeed, every $\phi \in \kappa\Ind_\lambda(\!C) \subseteq \kappa\Ind(\!C)$, being an iterated $\lambda$-small colimit of representables, is $\lambda$-presentable in $\kappa\Ind(\!C)$, hence by \cite[2.3.11]{MPacc} a $\lambda$-small $\kappa$-filtered colimit of $\kappa$-presentable objects in $\kappa\Ind(\!C)$, i.e., of representables (at least assuming $\!C$ is Cauchy-complete).%
\footnote{The proof of \cite[2.3.11]{MPacc} does not depend on the assumption that $\!C$ is (essentially) small, i.e., that $\kappa\Ind(\!C)$ is $\kappa$-accessible instead of ``class-$\kappa$-accessible'' \cite{CRacc}.
Cauchy-completeness of $\!C$ is not needed either, since the proof of \cite[2.3.10]{MPacc} really shows that (in our notation) every $\lambda$-presentable $\phi \in \kappa\Ind(\!C)$ can be written as a retract of a $\lambda$-small $\kappa$-filtered colimit of (not just $\kappa$-presentables but) representables.}
Similar results on ``relatively accessible'' categories of the form $\kappa\Ind_\lambda(\!C)$ can be found in \cite{Lheart}.

However, the argument given above works for all $\kappa < \lambda$, and avoids the combinatorial proof of \cite[2.3.11]{MPacc}.
In fact, we can deduce \cite[2.3.11]{MPacc} from the above:
\end{remark}

\begin{corollary}[Makkai--Paré]
\label{thm:makkai-pare}
For regular $\kappa \lhd \lambda < \infty$ and any $\!C$, every $\lambda$-presentable $\phi \in \kappa\Ind(\!C)$ is a $\lambda$-small $\kappa$-filtered colimit of representables.
\end{corollary}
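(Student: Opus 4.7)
The plan is to reduce to Corollary \ref{thm:indk} by establishing that every $\lambda$-presentable object $\phi \in \kappa\Ind(\!C)$ already belongs to $\kappa\Ind_\lambda(\!C)$; once this is shown, \ref{thm:indk} immediately supplies the desired presentation of $\phi$ as a single $\lambda$-small $\kappa$-filtered colimit of representables.

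First I would write $\phi$ as the canonical colimit $\phi = \injlim_{\!E} \yoneda$ over its $\kappa$-filtered category of elements $\!E := \yoneda_\!C \down \phi$.  A standard combinatorial consequence of $\kappa \lhd \lambda$ (see \cite[2.3]{MPacc}) is that every $\kappa$-filtered category can be expressed as a $\lambda$-directed union of $\lambda$-small $\kappa$-filtered subcategories; applying this to $\!E$ gives $\!E = \bigcup_\beta \!E_\beta$, and hence, setting $\phi_\beta := \injlim_{\!E_\beta} \yoneda \in \kappa\Ind_\lambda(\!C)$, a $\lambda$-directed presentation $\phi = \injlim_\beta \phi_\beta$ in $\kappa\Ind(\!C)$.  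By $\lambda$-presentability of $\phi$, the identity $\phi \to \phi$ factors through some $\phi_\beta \to \phi$, exhibiting $\phi$ as a retract of an object of $\kappa\Ind_\lambda(\!C)$.

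The main obstacle, and the final step, is to show that $\kappa\Ind_\lambda(\!C)$ is closed under retracts inside $\kappa\Ind(\!C)$.  For $\kappa = \omega$ this is routine: the splitting idempotent $e : \phi_\beta \to \phi_\beta$ realises $\phi$ as the colimit of the $\omega$-chain $\phi_\beta \to \phi_\beta \to \dotsb$, which is $\omega$-filtered and $\lambda$-small (as $\lambda > \omega$), and composing this chain with the $\lambda$-small $\omega$-filtered presentation of $\phi_\beta$ produced by Corollary \ref{thm:indk} gives $\phi$ as an iterated $\lambda$-small $\omega$-filtered colimit of representables, i.e.\ $\phi \in \Ind_\lambda(\!C)$.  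For $\kappa > \omega$ the $\omega$-chain is no longer $\kappa$-filtered, so one must replace it by a $\lambda$-small $\kappa$-filtered diagram with the same colimit, invoking $\kappa \lhd \lambda$ once more to absorb the $\omega$-chain into a $\kappa$-filtered context.  Once $\phi \in \kappa\Ind_\lambda(\!C)$ is secured, Corollary \ref{thm:indk} finishes the proof.
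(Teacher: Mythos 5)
Your overall route coincides with the paper's: both arguments use $\kappa \lhd \lambda$ to exhibit $\phi$ as a retract of an object of $\kappa\Ind_\lambda(\!C)$ (your $\lambda$-directed decomposition of the $\kappa$-filtered category of elements into $\lambda$-small $\kappa$-filtered pieces is exactly the content of the cited \cite[2.3.10]{MPacc}), and both then need to absorb the retract back into $\kappa\Ind_\lambda(\!C)$ so that \cref{thm:indk} can collapse the iteration to a single colimit. Your treatment of the case $\kappa = \omega$ via the $\omega$-chain is correct.

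The gap is precisely the step you flag for $\kappa > \omega$: ``replace the $\omega$-chain by a $\lambda$-small $\kappa$-filtered diagram with the same colimit, invoking $\kappa \lhd \lambda$ once more'' is not an argument --- there is no evident way to absorb an $\omega$-chain into a $\kappa$-filtered diagram, and $\kappa \lhd \lambda$ is not the relevant tool at this point. The missing idea, which is what the paper uses, is that splitting an idempotent is itself the colimit of a diagram over the free-living idempotent: the category with one object and one nonidentity morphism $e$ satisfying $e^2 = e$. This category is $\kappa$-filtered for \emph{every} regular $\kappa$, since $e$ is a cocone over the whole (two-morphism, hence $\lambda$-small) category: $e \circ 1 = e$ and $e \circ e = e$. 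Writing $\phi$ as the splitting of the idempotent $sr$ on your $\phi_\beta \in \kappa\Ind_\lambda(\!C)$ therefore exhibits $\phi$ as a $\lambda$-small $\kappa$-filtered colimit of an object of $\kappa\Ind_\lambda(\!C)$, whence $\phi \in \kappa\Ind_\lambda(\!C)$ by closure under iteration, and \cref{thm:indk} finishes. With this substitution your proof is complete, uniform in $\kappa$, and needs no case split.
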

\begin{proof}
As in \cite[proof of 2.3.10 and following remark]{MPacc}, use $\kappa \lhd \lambda$ to write $\phi$ as a retract of a $\lambda$-small $\kappa$-filtered colimit of representables, which is in $\kappa\Ind_\lambda(\!C)$ since splitting an idempotent is a $\lambda$-small $\kappa$-filtered colimit, hence a $\lambda$-small $\kappa$-filtered colimit of representables by \cref{thm:indk}.
\end{proof}

\section{Sifted colimits in the presence of pullbacks}
\label{sec:main}

Recall from \cite[5.62]{Kvcat} (see also \cite[4.2]{KScolim}) the following characterization of categories which are the free cocompletion of a subcategory under some given class of small colimits $\Phi$.
(Here $\Phi$ could be a class of weights as in the previous section; but we will only need one case, where the colimits are of diagrams of certain shapes.)
For a category $\!C$ cocomplete under $\Phi$-colimits, an object $X \in \!C$ is \defn{$\Phi$-atomic} if $\!C(X, -) : \!C -> \!{Set}$ preserves $\Phi$-colimits.
Now such $\!C$ is equivalent, via the restricted Yoneda embedding
$\!C -> [\!D^\op, \!{Set}]$,
to the free $\Phi$-cocompletion of a full subcategory $\!D \subseteq \!C$ iff
\begin{enumerate}
\item[(i)]  every object in $\!D$ is $\Phi$-atomic in $\!C$, and
\item[(ii)]  the (replete) closure of $\!D$ under $\Phi$-colimits is all of $\!C$.
\end{enumerate}
This generalizes the standard characterization, for $\Phi =$ ``filtered colimits'', of finitely (class-)accessible categories $\!D = \Ind(\!C)$ as those generated under filtered colimits by finitely presentable objects.

We now have the main result of the paper.
As in the previous section, we write $\Sind_\kappa$ (resp., $\Ind_\kappa$) to denote free cocompletion under $\kappa$-small sifted (resp., filtered) colimits.

\begin{theorem}
\label{thm:pb-sindk}
For uncountable regular $\kappa \le \infty$ and a category with pullbacks $\!C$, we have
\begin{equation*}
\Sind_\kappa(\!C) \simeq \Ind_\kappa(\Rec(\!C))
\end{equation*}
via the restricted Yoneda embedding
$\Sind_\kappa(\!C) -> [\Rec(\!C)^\op, \!{Set}]$.
\end{theorem}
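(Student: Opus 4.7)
The plan is to apply the characterization of free $\Phi$-cocompletions \cite[5.62]{Kvcat} recalled immediately above, taking $\Phi$ to be $\kappa$-small filtered colimits and the subcategory to be $\Rec(\!C) \subseteq \Sind_\kappa(\!C)$—this inclusion makes sense inside $[\!C^\op, \!{Set}]$ since reflexive coequalizers are finite sifted colimits and $\kappa$ is uncountable.  For atomicity of $U = \mathrm{coeq}(\yoneda G \rightrightarrows \yoneda X) \in \Rec(\!C)$, observe that $\Sind_\kappa(\!C)(U, -)$ is the equalizer of the evaluations $\mathrm{ev}_X \rightrightarrows \mathrm{ev}_G : \Sind_\kappa(\!C) \to \!{Set}$; each preserves $\kappa$-small filtered colimits (which are computed pointwise in $\Sind_\kappa(\!C) \subseteq [\!C^\op, \!{Set}]$) and finite limits commute with filtered colimits in $\!{Set}$.

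For generation, let $\phi \in \Sind_\kappa(\!C)$.  By \cref{thm:sindk} we may write $\phi = \Lan_{I^\op}\psi$ for some $\psi$ on a $\kappa$-small $I : \!D \hookrightarrow \!C$ whose category of elements $\!J := \yoneda_\!D \down \psi$ is sifted.  Since $\kappa$ is uncountable regular and pullback is finitary, I may enlarge $\!D$ within $\!C$ to be closed under pullbacks while remaining $\kappa$-small; applying \cref{thm:sindk} again to the enlargement keeps $\!J$ sifted and $\kappa$-small.  The key observation is that $\!J$ then \emph{inherits pullbacks from $\!D$}: a cospan $(X_1, a_1) \to (X_3, a_3) \leftarrow (X_2, a_2)$ in $\!J$ has pullback $(P, e)$ where $P = X_1 \times_{X_3} X_2$ in $\!D$ and $e := \psi(\pi_1)(a_1) = \psi(\pi_2)(a_2)$ (well-defined by the cospan condition).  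Thus $\!J$ is itself a sifted $\kappa$-small category with pullbacks, and \cref{thm:sifted-pb-rec} yields that $\Rec(\!J)$ is filtered and essentially $\kappa$-small.

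By the universal property of $\Rec(\!J)$, the composite $\!J \to \!D \to \Rec(\!D)$ (via Yoneda) extends to a reflexive-coequalizer-preserving functor $\Rec(\!J) \to \Rec(\!D)$, sending a reflexive graph $(G, d) \rightrightarrows (X, a)$ in $\!J$ to $\mathrm{coeq}(\yoneda_\!D G \rightrightarrows \yoneda_\!D X)$.  The maps $\yoneda_\!D X \to \psi$ classified by $a \in \psi(X)$—coequalizing $p, q$ because $\psi(p)(a) = d = \psi(q)(a)$—assemble into a cocone under $\psi$.  I claim this cocone is colimiting.  By filteredness of $\Rec(\!J)$, it suffices to check pointwise at each $Y \in \!D$: surjectivity onto $\psi(Y)$ is witnessed by the trivial reflexive graphs $1, 1 : (Y, c) \rightrightarrows (Y, c) \in \!J$ for $c \in \psi(Y)$; injectivity is the combinatorial crux.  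Given two representatives $g_i : Y \to X_i$ with $\psi(g_i)(a_i) = c$ (i.e., two morphisms $(Y, c) \to (X_i, a_i)$ in $\!J$), siftedness of $\!J$ yields a cocone $f_i : (X_i, a_i) \to (Z, b)$, under which both the pair $f_1 g_1, f_2 g_2 : Y \rightrightarrows Z$ and the composite graphs $G_i \rightrightarrows X_i \to Z$ are coequalized by $b$; applying \cref{thm:sifted-pb-refl} inside $\!J$ extends each to a reflexive graph on $(Z, b)$, and concatenating these via \cref{thm:graph:refl-cat-coeq} yields a single object of $\Rec(\!J)$ receiving the two starting representatives and identifying them.

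Finally, the cocontinuous $\Lan_{I^\op}$ sends representables to representables and thus $\Rec(\!D)$ into $\Rec(\!C)$, exhibiting $\phi = \Lan_{I^\op}\psi$ as the required $\kappa$-small filtered colimit of objects of $\Rec(\!C)$.  The main obstacle I expect is the injectivity step: one must simultaneously extend the pair $f_1 g_1, f_2 g_2$ and both graphs $G_i \to Z$ into a single coherent reflexive graph on $Z$, via successive applications of \cref{thm:sifted-pb-refl} and pullback-concatenation—essentially a relative version of the argument proving \cref{thm:sifted-pb-rec}.
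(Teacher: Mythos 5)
Your proof is correct and follows essentially the same route as the paper: atomicity of the objects of $\Rec(\!C)$ (as finite colimits of representables) plus generation via \cref{thm:sindk}, with filteredness and essential $\kappa$-smallness of $\Rec(\yoneda_\!D \down \psi)$ supplied by \cref{thm:sifted-pb-rec} after noting that the category of elements inherits pullbacks from a pullback-closed $\kappa$-small $\!D$. The only divergence is your final step: the explicit pointwise surjectivity/injectivity verification that $\psi$ is the colimit over $\Rec(\yoneda_\!D \down \psi)$ (including the graph-concatenation argument you flag as the crux) is sound but can be replaced by the formal observation the paper uses, namely that left Kan extending the canonical diagram along $\yoneda : \yoneda_\!D \down \psi -> \Rec(\yoneda_\!D \down \psi)$ does not change its colimit.
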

\begin{proof}
Since objects of $\Rec(\!C) \subseteq [\!C^\op, \!{Set}]$, being finite colimits of representables, are finitely presentable, hence also atomic with respect to $\kappa$-small filtered colimits (in $[\!C^\op, \!{Set}]$, hence also in the full subcategory $\Sind_\kappa(\!C)$ closed under $\kappa$-small filtered colimits), it suffices to show that every $\phi \in \Sind_\kappa(\!C)$ is a $\kappa$-small filtered colimit of objects in $\Rec(\!C)$.
By \cref{thm:sindk}, there is a $\kappa$-small subcategory $\!D \subseteq \!C$, which we may assume closed under pullbacks, and $\psi \in [\!D^\op, \!{Set}_\kappa]$ with sifted category of elements and left Kan extension $\phi$, so that $\phi$ is the $\kappa$-small sifted colimit of
\begin{equation*}
\yoneda_\!D \down \psi -> \!D \subseteq \!C --->{\yoneda_\!C} [\!C^\op, \!{Set}].
\end{equation*}
In other words, $\phi$ is the left Kan extension of this diagram along the unique functor $\yoneda_\!D \down \psi -> \*1$, hence is also the colimit of its reflexive-coequalizer-preserving left Kan extension to a diagram
\begin{equation*}
\Rec(\yoneda_\!D \down \psi) -> \Rec(\!C) \subseteq [\!C^\op, \!{Set}],
\end{equation*}
which is filtered by \cref{thm:sifted-pb-rec} since $\yoneda_\!D \down \psi$ inherits pullbacks from $\!D$, and is essentially $\kappa$-small since $\yoneda_\!D \down \psi$ is $\kappa$-small.
\end{proof}

\begin{corollary}
\label{thm:pb-siftedk}
For uncountable regular $\kappa \le \infty$, if a category with pullbacks $\!C$ has reflexive coequalizers and $\kappa$-small filtered colimits, then it has $\kappa$-small sifted colimits, and these are preserved by any functor $F : \!C -> \!D$ preserving reflexive coequalizers and $\kappa$-small filtered colimits.
\end{corollary}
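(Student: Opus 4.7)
The strategy is to read off both the existence and the preservation from Theorem~\ref{thm:pb-sindk}: that result already presents every object of $\Sind_\kappa(\!C)$ as a $\kappa$-small filtered colimit of reflexive coequalizers of representables, and by hypothesis these two building-block shapes of colimits both exist in $\!C$ and are preserved by $F$.

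Concretely, given a $\kappa$-small sifted diagram $D : \!J -> \!C$, I would form $\phi := \Colim_\!J \yoneda_\!C D$ in $\Sind_\kappa(\!C)$; combining Theorem~\ref{thm:pb-sindk} with Proposition~\ref{thm:pb-rec} to unpack each object of $\Rec(\!C)$ as a reflexive coequalizer presents $\phi$ as an essentially $\kappa$-small filtered colimit
\begin{equation*}
\phi \;\cong\; \Colim_i U_i, \qquad U_i = \mathrm{Coeq}(\yoneda_\!C G_i \rightrightarrows \yoneda_\!C X_i),
\end{equation*}
for some reflexive graphs $G_i \rightrightarrows X_i$ in $\!C$. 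Using the hypothesized reflexive coequalizers and $\kappa$-small filtered colimits in $\!C$, set $Y_i := \mathrm{Coeq}(G_i \rightrightarrows X_i) \in \!C$ and $X := \Colim_i Y_i \in \!C$. For any $Z \in \!C$, full faithfulness of Yoneda together with cocontinuity of the hom-functor in its first argument yields the natural chain
\begin{equation*}
\!C(X, Z) \;\cong\; \lim_i [\!C^\op, \!{Set}](U_i, \yoneda_\!C Z) \;\cong\; [\!C^\op, \!{Set}](\phi, \yoneda_\!C Z) \;\cong\; \lim_\!J \!C(D(J), Z),
\end{equation*}
identifying $X$ as the sifted colimit of $D$ in $\!C$.

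For preservation, applying $F$ to the above construction gives $F(X) \cong \Colim_i \mathrm{Coeq}(F G_i \rightrightarrows F X_i)$ since $F$ preserves each building-block colimit; rerunning the same Yoneda calculation with the presheaf $\!D(F-, W) : \!C^\op -> \!{Set}$ in place of the representable $\yoneda_\!C Z$ then yields $\!D(F(X), W) \cong \lim_\!J \!D(F D(J), W)$ naturally in $W \in \!D$, so $F(X) \cong \Colim_\!J F D$.

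The main subtlety is simply extracting the filtered diagram $(U_i)$ and the underlying reflexive graphs $G_i \rightrightarrows X_i$ in $\!C$ from Theorem~\ref{thm:pb-sindk}: this is visible from its proof, which writes $\phi$ as the colimit of a diagram $\Rec(\yoneda_\!E \down \psi) -> \Rec(\!C)$ for some $\kappa$-small $\!E \subseteq \!C$, whose image objects have exactly the required form by Proposition~\ref{thm:pb-rec}. Everything else is routine bookkeeping with full faithfulness of Yoneda and the universal properties of (reflexive) coequalizers.
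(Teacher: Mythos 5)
Your proof is correct and takes essentially the same approach as the paper: both use \cref{thm:pb-sindk} together with \cref{thm:pb-rec} to present the colimit weight $\phi \in \Sind_\kappa(\!C)$ of the given sifted diagram as a $\kappa$-small filtered colimit of reflexive coequalizers of representables, and then realize the corresponding colimit in $\!C$ and transport it along $F$. The only difference is cosmetic: the paper compresses your explicit Yoneda/hom-set bookkeeping (including the functoriality of $i \mapsto Y_i$) into a single appeal to cocontinuity of the weighted colimit $\star$ in the weight.
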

\begin{proof}
Given a $\kappa$-small sifted diagram $G : \!J -> \!C$, a colimit of $G$ is the same thing as a colimit of $1_\!C$ weighted by
$\phi := \injlim (\yoneda_\!C G) \in \Sind_\kappa(\!C)$;
by writing $\phi$ as a $\kappa$-filtered colimit of reflexive coequalizers of representables (and using cocontinuity of weighted colimit $\star$ in the weight \cite[3.23]{Kvcat}), we get the colimit of $G$ as the corresponding $\kappa$-filtered colimit of reflexive coequalizers in $\!C$, which is hence preserved by any $F : \!C -> \!D$ preserving these latter colimits.
\end{proof}

\section{Lex sifted colimits and algebraic exactness}
\label{sec:lex}

The statement of \cref{thm:pb-siftedk} is somewhat peculiar in that the pullbacks required to exist in $\!C$ are not required to be compatible with the colimits in any way, nor are they required to be preserved by $F$.
Under such compatibility conditions, more can be said.
Garner--Lack~\cite{GLlex} have developed a general theory of what it means for a category to have finite limits and a given class of colimits $\Phi$ that obey all compatibility or ``exactness'' conditions between each other as hold in $\!{Set}$ (with an enriched generalization), known as \defn{$\Phi$-exactness}.
Such ``exactness'' conditions have also been considered for infinite limits and certain classes of colimits $\Phi$, including for sifted colimits by Adámek--Lawvere--Rosický~\cite{ALRalgex} under the name of \defn{algebraically exact} categories.
In this final section, we use our main results above to simplify, as well as extend with cardinality bounds, some of the known characterizations of ``exactness'' for sifted and related colimits.

A general form of ``exactness'' condition holding in $\!{Set}$ is that ``limits distribute over colimits''.
Distributivity of \emph{all} small limits over colimits in a complete and cocomplete category $\!C$ means that the weighted-colimit functor ${-} \star 1_\!C : \Psh(\!C) -> \!C$, which is the $\Psh$-algebra structure map on $\!C$, also preserves limits.
When considering limits and colimits of bounded size, the bounds need to be suitably chosen for the analogous distributivity condition to make sense.
For regular $\lambda, \kappa \le \infty$, following \cite[A.2.6.3]{Ltop}, we write
\begin{equation*}
\lambda << \kappa
\end{equation*}
to mean that for every $\lambda_0 < \lambda$ and $\kappa_0 < \kappa$, we have $\kappa_0^{\lambda_0} < \kappa$.
This ensures that a $\lambda$-small product of $\kappa$-small diagrams or presheaves is still $\kappa$-small, hence we may speak of $\lambda$-small limits distributing over $\kappa$-small colimits.
The following result says that we may likewise speak of $\lambda$-small limits distributing over $\kappa$-small \emph{sifted} colimits:

\begin{proposition}
\label{thm:sindk-lim}
For regular $\lambda << \kappa \le \infty$ with $\kappa$ uncountable, for a $\lambda$-complete category $\!C$, $\Sind_\kappa(\!C) \subseteq [\!C^\op, \!{Set}]$ is closed under $\lambda$-small limits; and for a $\lambda$-continuous $F : \!C -> \!D$ between two $\lambda$-complete categories, $\Lan_{F^\op} : \Sind_\kappa(\!C) -> \Sind_\kappa(\!D)$ is $\lambda$-continuous.
\end{proposition}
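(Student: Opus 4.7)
My plan is to reduce, via the lifting \cref{thm:phik-lan-diag}, to the case of a small $\lambda$-complete subcategory of $\!C$, and then exploit the fact (guaranteed by $\lambda << \kappa$) that $\kappa$-small sifted colimits in $\!{Set}$ commute with $\lambda$-small limits.

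For the closure claim, I take a $\lambda$-small diagram $F : \!I -> \Sind_\kappa(\!C)$ and apply \cref{thm:phik-lan-diag} to lift it to some $G : \!I -> \Sind_\kappa(\!D)$ with $F \cong \Lan_{\iota^\op} \circ G$, for a $\kappa$-small subcategory $\!D \subseteq \!C$ with inclusion $\iota : \!D -> \!C$. I then enlarge $\!D$ by transfinitely iterating closure under $\lambda$-small limits taken in $\!C$: each stage adjoins at most $|\!D|^{<\lambda} < \kappa$ new objects by $\lambda << \kappa$, so after $\lambda$ steps the union remains $\kappa$-small (by uncountable regularity of $\kappa$) and is closed under $\lambda$-small limits in $\!C$. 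So I may assume that $\iota$ is $\lambda$-continuous.

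For this $\kappa$-small $\lambda$-complete $\!D$, I claim $\Sind_\kappa(\!D) = \Sind(\!D) \subseteq [\!D^\op, \!{Set}]$ is closed under $\lambda$-small limits. Via the decomposition $\Sind_\kappa(\!D) \simeq \Ind_\kappa(\Rec(\!D))$ from \cref{thm:pb-sindk} (applicable since $\!D$ has pullbacks), this reduces to the classical commutation of $\lambda$-small limits with $\kappa$-filtered colimits in $\!{Set}$ under $\lambda << \kappa$, together with closure of $\Rec(\!D) \subseteq [\!D^\op, \!{Set}]$ under $\lambda$-small limits (which follows from the concrete pullback description in \cref{thm:pb-rec} combined with the distribution of $\lambda$-small limits over reflexive coequalizers in $\!{Set}$). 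It remains to see that $\Lan_{\iota^\op}$ preserves this specific limit. Using $\Lan_{\iota^\op}(\phi)(C) = \injlim_{(C \down \iota)^\op} \phi$, I observe that $C \down \iota$ has $\lambda$-small limits: given a $\lambda$-small diagram of pairs $(D_i, f_i : C -> \iota D_i)$, form $D^* := \lim_i D_i$ in $\!D$ and use $\iota D^* = \lim_i \iota D_i$ (by $\lambda$-continuity of $\iota$) to assemble a canonical $f^* : C -> \iota D^*$. Hence $(C \down \iota)^\op$ is $\lambda$-filtered, so the outer $\injlim$ commutes with $\lambda$-small $\lim$ in $\!{Set}$, yielding $\Lan_{\iota^\op}(\lim G) \cong \lim(\Lan_{\iota^\op} \circ G) = \lim F$, which therefore lies in $\Sind_\kappa(\!C)$.

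For the second claim I apply the same lifting twice. Given a $\lambda$-small diagram $H : \!I -> \Sind_\kappa(\!C)$, I lift $H$ over some $\kappa$-small $\lambda$-complete $\!D' \subseteq \!C$; using $\lambda$-continuity of $F$, I further enlarge $\!D'$ and choose a $\kappa$-small $\lambda$-complete $\!E' \subseteq \!D$ with $F(\!D') \subseteq \!E'$ and $F|_{\!D'} : \!D' -> \!E'$ itself $\lambda$-continuous. Since $\Lan_{F^\op} \circ \Lan_{\iota^\op} \cong \Lan_{(F\iota)^\op}$, both $\Lan_{F^\op}(\lim H)$ and $\lim(\Lan_{F^\op} \circ H)$ equal $\Lan_{(F\iota)^\op}(\lim G)$ by applying the preceding paragraph's argument to $F\iota$ in place of $\iota$. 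The main obstacle is the closure step in paragraph three: the essential use of $\lambda << \kappa$ occurs in the commutation of $\lambda$-small limits with $\kappa$-filtered colimits in $\!{Set}$, and the cleanest route exploits \cref{thm:pb-sindk} to reduce sifted colimits to filtered colimits plus reflexive coequalizers.
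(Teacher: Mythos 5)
Your outer reduction---lifting a $\lambda$-small diagram via \cref{thm:phik-lan-diag} to a $\kappa$-small $\!D \subseteq \!C$, enlarging $\!D$ to be closed under $\lambda$-small limits, and pushing forward along $\Lan_{\iota^\op}$, whose $\lambda$-continuity you extract from the $\lambda$-filteredness of $(C \down \iota)^\op$---is exactly the paper's strategy (the last point is cited there from Day--Lack). The gap is in the core step: closure of the free sifted cocompletion of a small $\lambda$-complete $\!D$ under $\lambda$-small limits. You reduce this via $\Sind_\kappa(\!D) \simeq \Ind_\kappa(\Rec(\!D))$ to (a) distributivity of $\lambda$-small limits over the filtered colimits (fine in substance, though note $\Ind_\kappa$ means $\kappa$-\emph{small} filtered, not $\kappa$-filtered), and (b) closure of $\Rec(\!D)$ under $\lambda$-small limits, justified by ``distribution of $\lambda$-small limits over reflexive coequalizers in $\!{Set}$''. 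Claim (b) is false. Reflexive coequalizers are sifted, so they commute with \emph{finite} products only: for $\lambda > \omega$, a $\lambda$-small product of coequalizers of reflexive graphs is the quotient by the product of the generated equivalence relations, which is not generated by the product graph, since zigzag lengths need not be uniformly bounded across factors (e.g., the coequalizer of the product of the consecutive-integer graphs on $\{0, \dotsc, n\}$ is not a point, while the product of the coequalizers is). Worse, reflexive coequalizers do not commute with equalizers or pullbacks at all---in $\!{Set}$ quotients are merely pullback-\emph{stable}---and correspondingly $\Rec(\!D)$ fails to be closed even under equalizers: the equalizer of two maps $\yoneda X \rightrightarrows V$ into a proper quotient $V$ of a representable is a sieve on $X$ which is in general not the image of any single map into $\yoneda X$, hence not a coequalizer of a reflexive graph of representables. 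So the $\lambda$-small limit of your filtered diagram of $\Rec(\!D)$-objects does not land back in $\Ind_\kappa(\Rec(\!D))$ by this route.

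This is precisely where the paper departs from your plan: for small $\lambda$-complete $\!D$ it embeds $\!D$ fully and $\lambda$-continuously into a small $\lambda$-complete $\!E$ with finite coproducts, uses that $\Sind(\!E)$ is then the category of finite-product-preserving presheaves (manifestly closed under all limits), and transfers the conclusion back via a lemma asserting that $\phi \in \Sind(\!D)$ iff $\Lan_{I^\op}(\phi) \in \Sind(\!E)$ for a full embedding $I$. If you want to retain your decomposition, you would need an independent argument that a $\lambda$-small limit of objects of $\Rec(\!D)$ lies in $\Sind_\kappa(\!D)$, which is not supplied and is essentially the whole difficulty. Two smaller points: $\Sind_\kappa(\!D) \ne \Sind(\!D)$ for $\kappa$-small $\!D$ (the former contains only $\kappa$-small presheaves, so you must also check the limit remains $\kappa$-small, which $\lambda << \kappa$ gives); and the case $\lambda = \kappa$ requires the separate observation that $\kappa$ is then inaccessible, reducing it to $\lambda < \kappa$.
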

\begin{proof}
The second claim follows once we know the first, since for a $\lambda$-continuous $F : \!C -> \!D$, it is well-known that $\Lan_{F^\op} : \Psh(\!C) -> \Psh(\!D)$ is $\lambda$-continuous; see \cite[6.6]{DLlim}.

The case $\lambda = \kappa = \infty$ is \cite[3.11]{ALRalgex}.
More generally, if $\lambda = \kappa$, then since $\lambda << \kappa$, $\kappa$ is inaccessible, and so we may reduce to all of the cases $\lambda < \kappa$.

Consider the case $\lambda < \kappa = \infty$ with $\!C$ small.
If $\!C$ has finite coproducts, then $\Sind(\!C) \subseteq [\!C^\op, \!{Set}]$ consists of the finite-product-preserving functors \cite[2.8]{ARsifted}, which are closed under limits.
For a general small $\!C$, let $I : \!C -> \!D$ be a $\lambda$-continuous full embedding into a small $\lambda$-complete category with finite coproducts, e.g., the closure of the representables in $[\!C^\op, \!{Set}]$ under $\lambda$-small limits and finite coproducts.
Since $I$ is $\lambda$-continuous, as noted above, so is $\Lan_{I^\op}$.
Now for a $\lambda$-small diagram $F : \!J -> \Sind(\!C)$, we have $\Lan_{I^\op}(\projlim F) \cong \projlim(\Lan_{I^\op} \circ F) \in \Sind(\!D)$ since $\Lan_{I^\op} \circ F : \!J -> \Sind(\!D)$ and $\!D$ has finite coproducts.
This implies $\projlim F \in \Sind(\!C)$, by the following general fact:

\begin{lemma}
For any full and faithful $I : \!C -> \!D$ between small categories, $\phi \in [\!C^\op, \!{Set}]$ is in $\Sind(\!C)$ iff $\Lan_{I^\op}(\phi)$ is in $\Sind(\!D)$.
\end{lemma}
\begin{proof}
$\Longrightarrow$ is by pseudofunctoriality of $\Sind$.
For the converse, recall the following characterization of $\Sind$ from \cite[2.6]{ARsifted}: $\phi \in \Sind(\!C)$ iff $\Lan_{\yoneda_{\!C^\op}}(\phi) : [\!C, \!{Set}] -> \!{Set}$ preserves finite products.
So $\Lan_{I^\op}(\phi) \in \Sind(\!D)$ means that the left Kan extension of $\phi$ along the left-bottom composite
\begin{equation*}
\begin{tikzcd}
\!C^\op \dar["I^\op"'] \rar["\yoneda_{\!C^\op}"] &{} [\!C, \!{Set}] \dar["\Lan_I"] \\
\!D^\op \rar["\yoneda_{\!D^\op}"] &{} [\!D, \!{Set}]
\end{tikzcd}
\end{equation*}
preserves finite products.
Since the square commutes up to isomorphism, this is also the left Kan extension along the top-right composite.
Further left Kan extending along $(-) \circ I : [\!D, \!{Set}] -> [\!C, \!{Set}]$ recovers $\Lan_{\yoneda_{\!C^\op}}(\phi)$, since $I$ is full and faithful.
But since $(-) \circ I \dashv \Ran_I$, this further extension is the composite with the limit-preserving $\Ran_I$, whence $\Lan_{\yoneda_{\!C^\op}}(\phi)$ preserves finite products.
\end{proof}

Finally, consider the case $\lambda < \kappa$, $\lambda << \kappa \le \infty$, and $\!C$ possibly large.
Let $F : \!J -> \Sind_\kappa(\!C)$ be a $\lambda$-small diagram.
Since $\!J$ is $\kappa$-small, by \cref{thm:phik-lan-diag}, we may lift $F$ to a $G : \!J -> \Sind_\kappa(\!D)$ for some $\kappa$-small $\!D \subseteq \!C$, which we may assume to be closed under $\lambda$-small limits since $\lambda < \kappa$ and $\lambda << \kappa$, such that $F \cong \Lan_{I^\op} \circ G$ where $I : \!D -> \!C$ is the inclusion.
Since $\!D$ is small, we have $\projlim G \in \Sind(\!D)$ by the previous case, and also $\projlim G$ is a $\kappa$-small presheaf since each $G(J)$ was and $\lambda << \kappa$, whence $\projlim G \in \Sind_\kappa(\!D)$.
Again since $\Lan_{I^\op}$ is $\lambda$-continuous because $I$ is, we get $\projlim F \cong \projlim (\Lan_{I^\op} \circ G) \cong \Lan_{I^\op}(\projlim G) \in \Sind_\kappa(\!C)$.
\end{proof}

\begin{remark}
The preceding argument is not specific to $\Sind$, and can be applied to any class of colimits $\Phi = \Psi^+$ which is the class of ``$\Psi$-flat'' weights, in the sense of \cite{KScolim}, for a suitable ``sound'' class of limits $\Psi$ in the sense of \cite{ABLRdacc} or \cite[\S8]{KScolim}, yielding a generalization of \cite[6.3]{ABLRdacc} to $\lambda$-small limits.
As our focus is on $\Sind$, we leave the details to the interested reader.
\end{remark}

For regular $\lambda << \kappa \le \infty$, we define a category $\!C$ to be \defn{$(\lambda, \kappa)$-algebraically exact} if it
\begin{itemize}
\item  is $\lambda$-complete;
\item  has $\kappa$-small sifted colimits, or equivalently, $\yoneda_\!C : \!C -> \Sind_\kappa(\!C)$ has a left adjoint, which is then necessarily ${-} \star 1_\!C : \Sind_\kappa(\!C) -> \!C$; and
\item  ${-} \star 1_\!C$ is $\lambda$-continuous, i.e., ``$\lambda$-small limits distribute over $\kappa$-small sifted colimits''.
\end{itemize}
Informally, these conditions mean that $\!C$ is a ``quotient'', via ${-} \star 1_\!C$ which preserves $\lambda$-small limits and ($\kappa$-small sifted) colimits, of the ``subalgebra'' $\Sind_\kappa(\!C) \subseteq [\!C^\op, \!{Set}]$ of a ``power'' of $\!{Set}$, closed under those same operations.
It easily follows that all of the usual concrete ``exactness'' conditions relating specific types of limits and colimits are inherited from $\!{Set}$, such as those in the following characterization, which shows that a few familiar such conditions imply all others:

\begin{corollary}
\label{thm:siftex}
For regular $\lambda << \kappa \le \infty$ with $\kappa$ uncountable, a $\lambda$-complete category $\!C$ is $(\lambda, \kappa)$-algebraically exact iff it obeys the following four conditions:
\begin{enumerate}[label=(\roman*)]
\item \label{thm:siftex:ex}
Barr-exactness;
\item \label{thm:siftex:lex-filt}
$\kappa$-small filtered colimits exist and commute with finite limits;
\item \label{thm:siftex:prod-epi}
a $\lambda$-small product of regular epimorphisms is still a regular epimorphism;
\item \label{thm:siftex:prod-filt}
$\lambda$-small products distribute over $\kappa$-small filtered colimits, in the sense that for any $\lambda$-small family of $\kappa$-small filtered diagrams $(F_i : \!J_i -> \!C)_{i \in I}$, the canonical comparison morphism
\begin{equation*}
\injlim_{(J_i)_i \in {\prod_i} \!J_i} \prod_{i \in I} F(J_i) --> \prod_{i \in I} \injlim_{J \in \!J_i} F(J)
\end{equation*}
is an isomorphism.
\end{enumerate}
Moreover, a finitely continuous functor between two such categories preserves $\kappa$-small sifted colimits iff it is regular and preserves $\kappa$-small filtered colimits.
\end{corollary}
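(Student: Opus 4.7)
The plan is to prove the equivalence in two stages and then the functor statement, throughout leveraging the structural theorem \cref{thm:pb-sindk} that $\kappa$-small sifted colimits decompose as $\kappa$-small filtered colimits of reflexive coequalizers.

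For the direction $(\Rightarrow)$, I would observe that all four conditions (i)--(iv) hold trivially in $\!{Set}$, hence pointwise in $[\!C^\op, \!{Set}]$. By \cref{thm:sindk-lim}, $\Sind_\kappa(\!C)$ is closed there under $\lambda$-small limits, and it is closed by construction under $\kappa$-small sifted colimits (which subsume both reflexive coequalizers and $\kappa$-small filtered colimits); each of (i)--(iv) is expressible purely in terms of these operations, so transfers to $\Sind_\kappa(\!C)$. The reflection ${-} \star 1_\!C \colon \Sind_\kappa(\!C) \to \!C$ is left adjoint to the finite-limit-preserving, fully faithful $\yoneda_\!C$, is cocontinuous, and is $\lambda$-continuous by hypothesis; since each of (i)--(iv) concerns canonical maps being isomorphisms or certain diagrams being exact, a standard diagram chase transfers them along this reflection to $\!C$.

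For the direction $(\Leftarrow)$, I would first note that Barr-exactness (i) together with finite completeness and countable filtered colimits (which exist by (ii) since $\kappa$ is uncountable) furnishes reflexive coequalizers in $\!C$, as quotients by the generated equivalence relation; then \cref{thm:pb-siftedk} produces all $\kappa$-small sifted colimits in $\!C$. It remains to show $\lambda$-continuity of ${-} \star 1_\!C$. By \cref{thm:pb-sindk}, a $\kappa$-small sifted colimit in $\!C$ is computed as a $\kappa$-small filtered colimit of reflexive coequalizers, so the desired distributivity reduces to two facts: $(\alpha)$ $\lambda$-small limits distribute over $\kappa$-small filtered colimits, and $(\beta)$ $\lambda$-small limits distribute over reflexive coequalizers. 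Decomposing any $\lambda$-small limit as a finite limit of a $\lambda$-small product, $(\alpha)$ follows from (ii) and (iv), while $(\beta)$ follows from the Barr-exact fact that finite limits commute with reflexive coequalizers together with (iii), since $\lambda$-small products of regular epimorphisms being regular epimorphisms translates, via the kernel-pair description of reflexive coequalizers in Barr-exact categories, into distributivity of $\lambda$-small products over reflexive coequalizers.

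For the functor statement, the forward direction is immediate: a $\kappa$-sifted-cocontinuous $F$ preserves the special cases of reflexive coequalizers and $\kappa$-small filtered colimits, and combined with lexness this yields preservation of regular epimorphisms (since every regular epi in a Barr-exact category is the reflexive coequalizer of its kernel pair). Conversely, a lex regular functor between Barr-exact categories preserves reflexive coequalizers (as these are quotients by equivalence relation closures built from finite limits, regular images, and countable filtered colimits, which are $\kappa$-small filtered); then \cref{thm:pb-siftedk} gives preservation of $\kappa$-small sifted colimits. The main obstacle throughout will be the careful verification of $(\beta)$, which rests on the standard but slightly delicate interaction between Barr-exactness and products of regular epimorphisms and their kernel pairs.
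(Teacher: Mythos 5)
Your overall strategy---decompose $\kappa$-small sifted colimits via \cref{thm:pb-sindk} and handle $\kappa$-small filtered colimits and reflexive coequalizers separately---is exactly the one underlying the paper's proof, but the paper does not carry out the verifications directly: it combines \cref{thm:pb-sindk} with the Garner--Lack theorem that $\Phi$-exactness depends only on the saturation of the class $\Phi$ \cite[3.4, 4.4]{GLlex}, the concrete characterizations of exactness for filtered colimits \cite[5.10]{GLlex} and for reflexive coequalizers \cite[5.12]{GLlex}, Garner's theorem \cite[2.2]{Galgex} for general $\lambda$, and \cref{thm:sindk-lim} to identify Garner's $\mathscr{S}_\kappa(\!C)$ with $\Sind_\kappa(\!C)$. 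Your proposal in effect tries to reprove this cited material, and there are two genuine gaps. The first is local: your justification of $(\beta)$ invokes ``the Barr-exact fact that finite limits commute with reflexive coequalizers,'' which is false---already in $\!{Set}$, reflexive coequalizers commute with finite \emph{products} but not with equalizers or pullbacks. What Barr-exactness supplies is the Garner--Lack sense in which finite limits \emph{distribute over} quotients (pullback-stability of regular epimorphisms, effectivity of equivalence relations), a structurally different condition from commutation; extracting from it the distributivity you need is precisely the content of \cite[5.12]{GLlex}, not a starting point.

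The second gap is the central one: the reduction of ``${-} \star 1_\!C : \Sind_\kappa(\!C) \to \!C$ is $\lambda$-continuous'' to the two concrete facts $(\alpha)$, $(\beta)$ about diagrams in $\!C$ is asserted rather than proved, and this is where essentially all the remaining work lies. One must take a $\lambda$-small limit of weights in $\Sind_\kappa(\!C)$, each presented as a $\kappa$-small filtered colimit of reflexive coequalizers of representables, and rewrite that limit---using distributivity in presheaf categories and the re-indexing over products of index categories visible in condition (iv)---into a single such presentation to which $(\alpha)$ and $(\beta)$ apply compatibly with ${-} \star 1_\!C$. That exactness conditions are invariant under this kind of rewriting, i.e., depend only on the saturation of the generating class of colimits, is a theorem (\cite[3.4, 4.4]{GLlex}), not a routine diagram chase. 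Your forward direction and the functor statement are essentially sound (the latter matches the paper's explicit computation in terms of images and countable chains of relations), but the backward direction needs either the cited machinery or a careful reconstruction of it; as written it names the right ingredients without supplying the argument that assembles them.
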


This result follows from combining the known characterizations of previous notions of ``exactness'' from \cite{GLlex}, \cite{ALRalgex}, and \cite{Galgex} with our \cref{thm:pb-sindk}.
We now discuss the precise relation between our definition of ``$(\lambda, \kappa)$-algebraic exactness'' and previous notions, and simultaneously indicate how to derive the various cases of \cref{thm:siftex} from what is known:
\begin{enumerate}[label=(\alph*)]

\item
When $\lambda = \omega$, we recover $\Phi$-exactness in the sense of \cite[3.4(3)]{GLlex} for $\Phi = \Sind_\kappa$, the class of (weights for) $\kappa$-small sifted colimits.

In this case, \cref{thm:siftex} follows from combining \cref{thm:pb-sindk} with the characterizations of $\Phi$-exactness from \cite{GLlex} for various $\Phi$.
Indeed, by the proof of \cite[5.10]{GLlex}, $\!C$ is $\Phi$-exact for $\Phi =$ the class of $\kappa$-small filtered colimits iff \cref{thm:siftex:lex-filt} above holds.
By \cite[5.12]{GLlex}, $\!C$ is exact for reflexive coequalizers iff it is \cref{thm:siftex:ex} Barr-exact, and also admits pullback-stable colimits of certain countable sequences $R_0 -> R_1 -> \dotsb$.
It is well-known and easily verified that this last condition is implied by \cref{thm:siftex:lex-filt}, and also that when $\lambda = \omega$, the conditions \cref{thm:siftex:ex}, \cref{thm:siftex:lex-filt} imply \cref{thm:siftex:prod-epi}, \cref{thm:siftex:prod-filt} respectively.
Thus, the conditions \cref{thm:siftex:ex}--\cref{thm:siftex:prod-filt} are together equivalent to exactness for the union of the classes of $\kappa$-small filtered colimits and reflexive coequalizers.
By \cref{thm:pb-sindk}, this union has saturation $\Sind_\kappa$, hence they determine the same exactness notion by \cite[3.4, 4.4]{GLlex}.

\item
When $\lambda = \kappa = \infty$, we recover algebraic exactness in the sense of \cite{ALRalgex}.
This case of \cref{thm:siftex} was conjectured by \cite{ALRalgex} and proved by Garner~\cite{Galgex}, and also follows from all of the cases $\lambda < \kappa = \infty$.

\item
Likewise, each case $\lambda < \kappa = \infty$ follows from all cases $\lambda << \kappa < \infty$, here using that there are arbitrarily large such $\kappa$ (see \cite[2.3.5]{MPacc}), and that $\Sind(\!C)$ is the union of $\Sind_\kappa(\!C)$, whence $(\lambda, \infty)$-algebraic exactness is the conjunction of $(\lambda, \kappa)$-algebraic exactness, over all such $\kappa$.

\item
For $\lambda << \kappa < \infty$, a notion of ``$\lambda$-algebraic exactness'' was also defined in \cite[2.1]{Galgex}, to mean that the Yoneda embedding into a certain full subcategory $\mathscr{S}_\kappa(\!C) \subseteq [\!C^\op, \!{Set}]$ has a $\lambda$-continuous left adjoint.
It is then proved in \cite[2.2]{Galgex} that \cref{thm:siftex} holds with this notion in place of our ``$(\lambda, \kappa)$-algebraic exactness''.

In fact, for uncountable $\kappa$, $\mathscr{S}_\kappa(\!C) \subseteq [\!C^\op, \!{Set}]$, which is defined to be the closure of the representables under $\lambda$-small limits, reflexive coequalizers, and $\kappa$-small filtered colimits, is the same as $\Sind_\kappa(\!C)$ by \cref{thm:pb-sindk} and \cref{thm:sindk-lim}.
So Garner's notion of algebraic exactness is the same as ours, and we get \cref{thm:siftex} as stated.

(Garner \cite[second paragraph of \S2]{Galgex} always takes $\kappa$ to be the \emph{least} cardinal $>> \lambda$, which is why his notations only involve a single cardinal parameter.
However, everything in \cite{Galgex} works just as well for $\lambda << \kappa$.
The case of least $\kappa >> \lambda$ was enough to derive the case $\lambda = \kappa = \infty$ of \cref{thm:siftex} as the main result of \cite{Galgex}, since \cref{thm:pb-sindk} was known for complete $\!C$ and $\kappa = \infty$ by \cite[5.1]{ARValgex}, as was \cref{thm:sindk-lim} for $\lambda = \kappa = \infty$ by \cite[3.11]{ALRalgex}.)

\item
In all cases, we may unravel the above arguments to arrive at the following concrete description of how a $\kappa$-small sifted colimit is computed using only the structure in \cref{thm:siftex:ex} and \cref{thm:siftex:lex-filt} above:
first, use \cref{thm:pb-sindk} to reduce to a $\kappa$-small filtered colimit of reflexive coequalizers;
then (as in the proof of \cite[5.12]{GLlex}) reduce each such reflexive coequalizer of a graph $G \rightrightarrows X$ to the quotient of $X$ by the equivalence relation generated by the image reflexive relation $R \subseteq X^2$ of $G$,
which is the colimit of the countable sequence $R \subseteq R \circ R^\op \circ R \subseteq \dotsb$ of composites of binary relations computed (as in any regular category) using pullback and image.

Each step of this procedure is preserved by a regular $F : \!C -> \!D$ preserving $\kappa$-small filtered colimits, which proves the last claim of \cref{thm:siftex}.

\end{enumerate}

\bigskip\noindent
Department of Mathematics and Statistics \\
CRM/McGill University \\
Montréal, QC, H3A 0B9, Canada \\
Email: \nolinkurl{ruiyuan.chen@umontreal.ca}

\end{document}